\newtheorem{thm}{Theorem}[section]
\newtheorem*{thm*}{Theorem}
\newtheorem{prop}[thm]{Proposition}
\newtheorem*{prop*}{Proposition}
\newtheorem*{cor*}{Corollary}
\newtheorem{lem}[thm]{Lemma}
\newtheorem*{lem*}{Lemma}
\newtheorem*{oquest*}{Open Question}
\theoremstyle{remark}
\theoremstyle{remark}
\newtheorem*{rmk*}{Remark}
\theoremstyle{definition}
\newtheorem{defn}[thm]{Definition}
\theoremstyle{definition}
\newtheorem{ass}[thm]{Assumption}
\theoremstyle{definition}
\newtheorem*{defn*}{Definition}
\theoremstyle{definition}
\theoremstyle{definition}
\newtheorem{exc}[thm]{Exercise}
\numberwithin{equation}{section}
\newcommand{\Z}{\mathbb{Z}}
\newcommand{\QQ}{\mathbb{Q}}
\newcommand{\SelT}{\text{Sel}^{(2)}}
\newcommand{\QmodSq}{\mathbb{Q}^{\times}\big/(\mathbb{Q}^{\times})^2}
\DeclareFontFamily{U}{wncy}{}
\DeclareFontShape{U}{wncy}{m}{n}{<->wncyr10}{}
\DeclareSymbolFont{mcy}{U}{wncy}{m}{n}
\DeclareMathSymbol{\Sha}{\mathord}{mcy}{"58}
\begin{document}


\title[4-Selmer groups and 8-class groups]{
Governing fields and statistics for 4-Selmer groups and 8-class groups
}

\author{
Alexander Smith
}

\thanks{I would like to thank Bjorn Poonen for his useful feedback during the final stages of this project.}

\date{\today}

\begin{abstract}
Taking $A$ to be an abelian variety with full $2$-torsion over a number field $k$, we investigate how the $4$-Selmer rank of the quadratic twist $A^{(d)}$ changes with $d \in k^{\times}$. We show that this rank depends on the splitting behavior of the primes dividing $d$ in a certain number field $L/k$.

Assuming the grand Riemann hypothesis, we then prove that, given an elliptic curve $E/\QQ$ with full rational $2$-torsion, the quadratic twist family of $E$ usually has the distribution of $4$-Selmer groups predicted by Delaunay's heuristic. Analogously, and still subject to the grand Riemann hypothesis, we prove that the set of quadratic imaginary fields has the distribution of $8$-class groups predicted by the Cohen-Lenstra heuristic.
\end{abstract}

\maketitle

\section{Introduction}
\label{sec:intro}
In \cite{CoLe84}, Cohen and Lenstra gave a heuristic for understanding the distribution of class groups of number fields. Writing $K_d = \QQ(\sqrt{d})$ and taking $H$ to be any finite abelian $p$-group for $p \ne 2$, their work predicts that the probability
\[\lim_{N \rightarrow \infty} \frac{1}{N}|\{-N < d < 0 \,: \, \text{Cl}\, K_d[p^{\infty}] \cong  H \}|\]
exists, is positive, and is inversely proportional to the number of automorphisms of $H$.  In \cite{FrMa89}, Friedman and Washington proved that this distribution of groups is approached by the limit of the distribution of cokernels of random $m \times m$ matrices with entries in $\Z_p$ as $m$ increases.

It is generally accepted that Tate-Shafarevich groups of abelian varieties are analogous objects to class groups of number fields, and this analogy carries over to heuristics.  The direct elliptic curve analogy of the Cohen-Lenstra heuristic was first found by Delaunay \cite{Del01}, who gave concrete conjectures for the distribution of the groups $\Sha(E)[p^{\infty}]$ over the set of elliptic curves $E/\QQ$. Delaunay's heuristic was placed in a more general setup by Bhargava, Kane, Lenstra, Poonen, and Rains \cite{BKLPR15}. These five authors suggested that the distribution of $p^{\infty}$-Selmer groups of rank $r = 0, 1$ elliptic curves is approached by the distribution of cokernels of random alternating $(2m + r)$-dimensional matrices with entries in $\Z_p$ , and they proved that this heuristic was consistent with that of Delaunay. We will refer to this model as the BKLPR heuristic.

A glaring difference between the Cohen-Lenstra and BKLPR heuristics is that the former is restricted to $p \ne 2$, while the latter has no such restriction. This is easy enough to explain. Per Gauss's genus theory, the two torsion of the class group $\text{Cl}\, K_d$ of an imaginary quadratic field is represented by all ideals with squarefree norm dividing the discriminant $\Delta$ of $K_d$. The only relations among these ideals are the ones generated by
\[(1) \sim (\sqrt{d}).\]
Then, if $\Delta$ is divisible by exactly $r$ distinct prime factors, we get $\text{Cl}\, K_d[2] \cong (\Z/2\Z)^{r-1}$. Because of this, over the family of negative $d$, the probability that $\text{Cl}\,K_d[2^{\infty}]$ is isomorphic to $H$ is zero for all $2$-groups $H$. Such an issue is not present for Selmer groups, so the restriction $p \ne 2$ is only needed for the Cohen-Lenstra heuristics.

As reported by Wood \cite{Wood14}, Gerth found a workaround for the Cohen-Lenstra heuristics at $p = 2$ in 1987. Since the two torsion of the class group was uninteresting, Gerth considered instead how the $2$-Sylow subgroup of $2\text{Cl} \, K_d$ varied for $d$ in a family, where we are using additive notation for the class group. He predicted that, if $H$ is a finite abelian $2$-group, then 
\[\lim_{N \rightarrow \infty} \frac{1}{N}|\{-N < d < 0 \,: \, 2(\text{Cl}\, K_d[2^{\infty}] )\cong  H \}|\]
exists, is positive, and is inversely proportional to the number of automorphisms of $H$.

Even with this addendum, $2$ has remained an outsider prime in articles about the Cohen-Lenstra heuristics. This is particularly obvious from articles whose main results are stated for odd $p$ but whose proofs apply equally well for $2(\text{Cl}\, K[2^{\infty}])$ and $\text{Cl}\,K[p^{\infty}]$; this is true of the Friedman-Washington result, for example. Though less put upon than the corresponding class group, the $2^{\infty}$-Selmer group is also in a category apart from the other $p^{\infty}$-Selmer groups. At least they have each other; for any $k \ge 2$, there is a particularly close correspondence between the  groups
\[ 2(\text{Cl} \,K_{\Delta}[2^{k}])\]
over the set of negative quadratic discriminants $\Delta$ and
\[\text{Sel}^{(2^{k-1})}(E^{(d)})\big/ E^{(d)}[2]\]
over the set of quadratic twists $E^{(d)}$ of some elliptic curve $E/\QQ$ with full rational $2$-torsion, with this quotient coming from the map
\[E^{(d)}[2] \xrightarrow{\quad\,\,\,} E^{(d)}(\QQ)/2E^{(d)}(\QQ) \xhookrightarrow{\quad\,\,\,} \text{Sel}^{(2^{k-1})}(E^{(d)}).\]

The reason that $p = 2$ is such an outsider is again easy to explain. Even though $2(\text{Cl}\, K_d[2^{\infty}])$ no longer has its shape dictated by Gauss genus theory, it is still affected by it. Write $\Delta = d_2p$ and $\Delta' = d_2p'$ for the discriminant of two imaginary quadratic fields, where $p, p'$ are odd primes. Write $V_{\Delta}$ for the subspace of $\QmodSq$ generated by positive divisors of $d_2$, with $V_{\Delta'}$ the analogous space for $\Delta'$. Then genus theory quickly gives us
\[\text{Cl}\, K_{\Delta}[2] \,\cong V_{\Delta} = V_{\Delta'} \cong\, \text{Cl}\, K_{\Delta'}[2].\]
In other words, the two torsion of the class groups of $K_{\Delta}$ and $K_{\Delta'}$ can be given identical arithmetic structure. Furthermore, supposing $pp'$ is a square mod $8d_2$, we find that the image of $2(\text{Cl}\, K_{\Delta}[4])$ in $V_{\Delta}$ equals that of $2(\text{Cl}\, K_{\Delta'}[4])$ in $V_{\Delta'}$. With this, the identical structure for $2$-class groups is passed on to an identical class structure for $4$-class groups. In general, for any $k > 1$, we can partition the set of all primes not dividing $2d_2$ into a finite number of sets so that
\[2^{k-1}(\text{Cl}\, K_{\Delta}[2^k]) = 2^{k-1}(\text{Cl}\, K_{\Delta'}[2^k])\]
inside $V_{\Delta}$ whenever $p$ and $p'$ come from the same set. Analogously, for any $k \ge 2$, we can split the set of primes not dividing twice the conductor of $E^{(d_2)}/\QQ$ into finitely many classes so that the arithmetic structure of
\[2^{k-2} \big(\text{Sel}^{(2^{k-1})}(E^{(d_2p)})\big/ E^{(d_2p)}[2]\big)\]
only depends on the class of $p$. There is no analogue for this arithmetic invariance for any odd class group or any odd Selmer group.

In the case $k = 2$, the advantages of the simple structure of $2^{k-1}$-Selmer groups and $2^{k}$-class groups have been well used. On the class group side, Fouvry and Kl{\"u}ners  proved that the distribution of $4$-class ranks of quadratic fields are consistent with Gerth's heuristic \cite{Fouv07}. On the other side, Kane \cite{Kane13} and Swinnerton-Dyer \cite{Swin08} have proved that, if $E/\QQ$ is an elliptic curve with full $2$-torsion and no cyclic subgroup of order $4$ defined over $\QQ$, the distribution of $\SelT(E^{(d)})/E[2]$ in the quadratic twist family of $E$ is as predicted by the BKLPR heuristics. These results also show the interplay between $4$-class groups and $2$-Selmer groups, with the Fouvry-Kl{\"u}ners result partially based on earlier methods coming from the analysis of the $2$-Selmer group of the congruent number curve \cite{Heat94}.

For $k = 3$, progress has been concentrated on the side of $2^k$-class groups. In 1983, Cohn and Lagarias conjectured that the $8$-class rank of $\QQ(\sqrt{dp})$ was determined by the Artin class of $p$ over some finite extension $L/\QQ$ determined by $d$. They called such extensions \emph{governing fields} \cite{CoLa83}. Stevenhagen gave the first proof that such governing fields existed \cite{Stev89}, and many other authors \cite{Cohn69, Kapl73, Mort82, Mort90, WuYue07, JuYue11, Lu15} found results in special cases.

Unbeknownst to these authors, substantial results towards the construction of governing fields of $8$-class groups had been found by R{\'e}dei in 1939 \cite{Rede39}. Using this, minimal governing fields were found in all cases by Corsman in his thesis \cite{Cors07} using a variant of the argument outlined in Exercise \ref{exc:okay_then}. However, despite the redundancy, the work of Cohn and Lagarias remains important, as governing fields remain an interesting way of understanding the structure of $4$-Selmer groups.

Essentially the only prior evidence for $4$-Selmer groups being regulated by a governing field comes from Hemenway's thesis \cite{Heme06}, where a splitting condition characterizing some non-congruent primes was derived by calculating $4$-Selmer groups.  However, as we show in Theorem \ref{thm:4sel_gov}, governing fields of $4$-Selmer groups are quite ubiquitous; indeed, there are governing fields for arbitrary abelian varieties over number fields $A/k$ whenever $A[2]$ has full $2$-torsion over $k$.

The goal of this paper is thus established. We will conditionally extend the results of Fouvry-Kl{\"u}ners to $8$-class groups and the results of Kane and Swinnerton-Dyer to $4$-Selmer groups. To do this, we will use the idea of governing fields, which are well established for $8$-class groups and not established for $4$-Selmer groups. Section \ref{sec:8class} begins the study of $8$-class groups, rederiving minimal governing fields and finding genericity conditions under which the governing field is particularly nice. 

Section \ref{sec:4sel} begins by finding governing fields for the $4$-Selmer group of abelian varieties over number fields. Specializing to elliptic curves over $\QQ$, we again find genericity conditions under which the governing field is particularly nice.

One of the nice properties coming from the genericitiy conditions is that, in generic families of discriminants indexed by one prime $p$, or in generic families of twists indexed by one prime $p$, the $8$-class rank and $4$-Selmer rank are distributed as expected by the Cohen-Lenstra and BKLPR heuristics respectively. In the final section of this paper, we bundle the distributions found in these small families together, finding the distribution of $8$-class groups for imaginary quadratic fields and of $4$-Selmer groups for quadratic twist families. To do this bundling, we need a strong, effective form of the Chebotarev density theorem, so we assume some cases of the grand Riemann hypothesis in these final theorems.

This work leaves two big questions unanswered. The first is whether the grand Riemann hypothesis is really central to these results. I hazard that it is not. A large sieve can be used to prove an averaged form of the Riemann hypothesis for Dirichlet $L$-functions \cite[Ch. 7]{Iwan04}, and I would guess that a similar average indexed over Artin $L$-functions of certain $2$-group extensions of $\QQ$ would be powerful enough to give the distributions we prove conditionally. However, I have not yet seen such a large sieve.

The second question is whether this work can be extended to higher Selmer groups or class groups. The analogy between the two situations will doubtlessly still be useful, although the evidence of this thus far is limited to one paper \cite{BrHe13}. Even if governing fields in the sense of this paper cannot be found, there will likely still be interesting structure. For example, in some situations, we expect the $8$-Selmer structure of $E^{(d_1d_2)}$ to be partially determined by the structures of $E$, $E^{(d_1)}$, and $E^{(d_2)}$; for the $16$-Selmer structure of $E^{(d_1d_2d_3)}$ to be partially determined by $E$, the $E^{(d_i)}$, and the $E^{(d_id_j)}$ for $i, j$ in $\{1, 2, 3\}$; etc. If these correspond to relations among the corresponding $L$-functions, they might give a new way of understanding how these objects are related. This will need to wait for a later paper.

\section{Governing fields for 8-class groups}
\label{sec:8class}
We start with a reciprocity law originally due to R{\'e}dei \cite{Rede39}. For $a, b$ nonsquare integers, let $K_{a,b}$ be the field $\QQ(\sqrt{a}, \sqrt{b})$. This field is of degree four over $\QQ$ unless $ab$ is a square. Take $(\,\, ,\,\,)_v$ to be the Hilbert symbol. If $(a, b)_v = +1$ at all rational places, the equation
\[x^2 - ay^2 = bz^2\]
has a solution in rationals. Choose $x, y, z$ so that 
\[L_{a,b} = K_{a,b}\left(\sqrt{x + y \sqrt{a}}\right).\]
is unramified above $K_{a, b}$ at all primes not ramified in both $\QQ(\sqrt{a})/\QQ$ and $\QQ(\sqrt{b})/\QQ$. This field is quadratic above $K_{a,b}$ and is either a $D_8$ or $\Z/4\Z$ Galois extension of $\QQ$. The above process can produce many potential fields $L_{a,b}$, but this will not matter.

\begin{prop}
\label{prop:dih_rec}
Let $a$, $b$, and $c$ be squarefree rational integers not equal to $1$. We assume that $c$ is positive, equal to $1$ mod $8$, and relatively prime to $a$ and $b$. Also assume that we have
\[(a, b)_v = (a, c)_v = (b, c)_v = +1\]
at all rational places $v$.

Following the method above, we find fields $L_{a,b}/K_{a,b}$ and $L_{a,c}/K_{a,c}$. Also, we can find an ideal $\mathbf{b}$ of $K_{a,c}$ that has norm $|b|$ in $\QQ$, and an ideal $\mathbf{c}$ of $K_{a,b}$ that has norm $c$ in $\QQ$.

Identify the Galois groups of $L_{a,c}/K_{a,c}$ and $L_{a, b}/K_{a,b}$. Then we have the following equality of Artin symbols:
\begin{equation}
\label{eq:dih_rec}
\left[\frac{L_{a,b}/K_{a,b}}{\mathbf{c}} \right] = \left[\frac{L_{a,c}/K_{a,c}}{\mathbf{b}} \right].
\end{equation}
\end{prop}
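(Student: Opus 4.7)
My plan is to reinterpret both Artin symbols as quadratic power-residue symbols over $\QQ(\sqrt a)$ and then deduce the equality from the product formula for Hilbert symbols in $\QQ(\sqrt a)$.

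Writing $L_{a,b} = K_{a,b}(\sqrt{\alpha_b})$ with $\alpha_b = x + y\sqrt a \in \QQ(\sqrt a)$ satisfying $x^2 - ay^2 = bz^2$, I would first identify the left-hand Artin symbol with the quadratic power-residue symbol $\bigl(\tfrac{\alpha_b}{\mathbf c}\bigr)_{K_{a,b}}$. Since $\alpha_b$ already lies in $\QQ(\sqrt a)$, functoriality of the power-residue symbol under norms gives
\[\left(\tfrac{\alpha_b}{\mathbf c}\right)_{K_{a,b}} = \left(\tfrac{\alpha_b}{\mathfrak c}\right)_{\QQ(\sqrt a)}, \qquad \mathfrak c := N_{K_{a,b}/\QQ(\sqrt a)}(\mathbf c),\]
and the right-hand symbol reduces analogously to $\bigl(\tfrac{\alpha_c}{\mathfrak b}\bigr)_{\QQ(\sqrt a)}$ for a corresponding $\alpha_c \in \QQ(\sqrt a)$ with $N(\alpha_c) = c{z'}^2$ and an ideal $\mathfrak b$ of $\QQ(\sqrt a)$ of norm $|b|$. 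The hypotheses $(a,b)_v = (a,c)_v = +1$ ensure that $b$ and $c$ are global norms from $\QQ(\sqrt a)$, so $\mathfrak b$ and $\mathfrak c$ have the factorizations these expressions implicitly demand.

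Now both sides are power-residue symbols in the single field $\QQ(\sqrt a)$. Using the standard identity $(u,\pi)_{\mathfrak p} = \bigl(\tfrac{u}{\mathfrak p}\bigr)$ for a unit $u$ and a uniformizer $\pi$ at a tame prime $\mathfrak p$, I would rewrite the ratio of the two sides as
\[\prod_{v \in S}\, (\alpha_b, \alpha_c)_v,\]
where $S$ consists of the primes of $\QQ(\sqrt a)$ above $2$, above the primes dividing $a$, and the archimedean primes, with the remaining factors cancelling against each other by the global product formula $\prod_v (\alpha_b, \alpha_c)_v = 1$. Combining the two norm equations $x^2 - ay^2 = bz^2$ and ${x'}^2 - a{y'}^2 = c{z'}^2$ with the hypotheses $(a,b)_v = (a,c)_v = (b,c)_v = +1$, $c \equiv 1 \pmod 8$, and $c > 0$, I would verify that this residual product over $S$ is $+1$.

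The main obstacle is the local Hilbert symbol computation at the primes of $\QQ(\sqrt a)$ above $2$ and at the archimedean places; all the Rédei-style subtlety lives here. The assumption $c \equiv 1 \pmod 8$, combined with $c > 0$ and the control on $\alpha_c$ modulo powers of $2$ extracted from $x'^2 - a y'^2 = c z'^2$, is precisely what forces the $2$-adic and archimedean contributions to be trivial; keeping track of the dependence on $a \pmod 8$ is the fiddliest point. A preparatory verification, waved away in the excerpt by ``this will not matter,'' is that $\bigl(\tfrac{\alpha_b}{\mathbf c}\bigr)_{K_{a,b}}$ does not depend on the choice of solution $(x,y,z)$: any two valid choices change $\alpha_b$ by a square in $K_{a,b}^\times$ (using the unramifiedness condition on $L_{a,b}$ away from the joint discriminant, together with the coprimality of $\mathbf c$ to those primes), so the symbol is manifestly invariant.
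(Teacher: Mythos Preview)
Your approach is sound in outline and is, in fact, precisely the alternative proof the paper assigns as Exercise~\ref{exc:okay_then}: apply Hilbert reciprocity $\prod_v (\alpha_b,\alpha_c)_v = +1$ in $\QQ(\sqrt a)$, identify the contributions at primes above $b$ and $c$ with the two Artin symbols, and check that the remaining local symbols at $2$, $\infty$, and the ramified primes of $\QQ(\sqrt a)/\QQ$ are trivial. So you have rediscovered the ``classical'' route.

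The paper's own proof is genuinely different. Rather than descending to $\QQ(\sqrt a)$ and computing Hilbert symbols of the explicit generators $\alpha_b,\alpha_c$, it works entirely over $\QQ$ with Galois cohomology. The key construction is a $1$-cochain $\gamma_{m,n}:G_\QQ\to\pm1$ (factoring through $\text{Gal}(L_{m,n}/\QQ)\hookrightarrow D_8$) whose coboundary is the cup product $\chi_m\cup\chi_n$. One then forms the two $2$-cochains $\gamma_{c,a}\cup\chi_b$ and $\chi_c\cup\gamma_{a,b}$; both have coboundary $\chi_c\cup\chi_a\cup\chi_b$, so their difference is a global $2$-cocycle, and $\sum_v\text{inv}_v(\cdot)=0$ gives the reciprocity law after localizing. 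The hypotheses $c\equiv 1\pmod 8$, $c>0$, and $(c,ab)=1$ are used to kill the invariants at $2ab\infty$ on one side and at $c$ on the other, with no explicit Hilbert-symbol bookkeeping in $\QQ(\sqrt a)$.

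What each approach buys: your Hilbert-symbol argument is more elementary and stays closer to R\'edei's original computation, but the ``fiddly'' local analysis at $2$ and at ramified primes that you flag is real work, and the dependence on the particular choice of $\alpha_b,\alpha_c$ (which you correctly note must be checked) adds friction. The paper's cohomological argument packages all of this into the single observation $d\gamma_{m,n}=\chi_m\cup\chi_n$, and---crucially for the paper's purposes---this formalism is exactly what is reused in Section~\ref{sec:4sel} to build governing fields for $4$-Selmer groups of abelian varieties, where no analogue of ``pass to $\QQ(\sqrt a)$ and compute with explicit $\alpha$'s'' is available.
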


\begin{proof}

Consider the dihedral group
\[D_8 = \langle r, s\,\, |\,\, r^4 = s^2 = (rs)^2 = 1\rangle.\]
Define a map $\gamma: D_8 \rightarrow \pm 1$ by
\[\gamma(1) = \gamma(r) = \gamma(s) = \gamma(rs) = +1 \,\,\,\,\, \text{and}\]
\[\gamma(r^2) = \gamma(r^3) = \gamma(r^2s) = \gamma(r^3s) = -1.\]
Note that $\gamma(r^2 g) = -\gamma(g)$ for all $g \in D_8$. Then, for all $\sigma$ and $\tau$ in the group, we have that the coboundary $d\gamma(\sigma, \tau) = \gamma(\sigma \tau)\cdot \gamma(\sigma)\cdot\gamma(\tau)$ satisfies
\[d\gamma(r^2\sigma, \tau) = d\gamma(\sigma, r^2\tau) = d\gamma(\sigma, \tau).\]
This together with the values in Table \ref{tab:cobnd_vals} completely determines $d\gamma$. In particular, taking $\chi_1: D_8 \rightarrow \pm 1$ to be the homomorphism with kernel $\langle r^2, rs \rangle$, and taking $\chi_2$ to be the homomorphism with kernel $\langle r^2, s \rangle$, we have
\[d\gamma = \chi_1 \cup \chi_2\]
where the cup product of cocycles is induced by the natural bilinear map $\Z/2\Z \times \Z/2\Z \rightarrow \Z/2\Z$.

For $m$ a nonsquare integer, take $\chi_m$ to be the quadratic character corresponding to $\QQ(\sqrt{m})/\QQ$.
Given $m, n$ squarefree integers not equal to one that satisfy $(m, n)_v = +1$ at all places of $\QQ$, we choose $L_{m, n}$ as before. Writing $G_{\QQ}$ for $\text{Gal}(\bar{\QQ}/\QQ)$, we can choose an injection of this group into $D_8$ and define a map
\[\gamma_{m, n} : G_{\QQ} \xrightarrow{\ \ \ \ } \text{Gal}(L_{m, n}/ \QQ) \xhookrightarrow{\quad \ \ } D_8 \xrightarrow{\ \ \gamma\ \ } \pm 1\]
which satisfies $d\gamma_{m, n} = \chi_m \cup \chi_n$. This applies whether $L_{m, n}$ is a $D_8$ or $\Z/4\Z$ extension of $\QQ$.

\begin{table}
\begin{center}
\begin{tabular}{c  c | c  c  c  c|} \cline{3-6}
 & &  \multicolumn{4}{|c|}{$\tau$} \\
 \cline{3-6}
 & & 1 & $r$ & $s$ & $rs$ \\ \cline{1-6}
 \multicolumn{1}{ |c|}{\multirow{4}{*}{$\sigma$}} & 1 & 1 & 1 & 1 & 1 \\ 
 \multicolumn{1}{ |c|  }{} & $r$ & 1 & -1 & 1 & -1 \\
 \multicolumn{1}{ |c|  }{} & $s$ & 1 & -1 & 1 & -1 \\ 
 \multicolumn{1}{ |c|  }{} & $rs$ & 1 & 1 & 1 & 1 \\ \cline{1-6} \end{tabular}
 \end{center}
 \caption{Values of $d\gamma(\sigma, \tau)$}
 \label{tab:cobnd_vals}
\end{table}

With this notation,
\[\gamma_{c, a} \cup \chi_b\]
and
\[\chi_c \cup \gamma_{a, b}\]
are both well defined Galois $2$-cochains. The coboundary of both of these is
\[\chi_c \cup \chi_a \cup \chi_b.\]
In particular, the coboundary of the difference of these cochains is zero, so the difference is a $2$-cocycle corresponding to an element in $H^2(G_{\QQ}, \pm 1)$. Taking 
\[\text{inv}_v: H^2(G_{\QQ_v}, \overline{\QQ}_v^{\times}) \rightarrow \QQ/\Z\]
to be the canonical map at every place of $\QQ$, we then get
\[\sum_v \text{inv}_v\left( \gamma_{c,a} \cup \chi_b \,-\, \chi_{c} \cup \gamma_{a, b} \right) = 0\]
from class field theory.

We see that this cocycle is unramified at all places outside $2abc\infty$. Then
\[\sum_{v| 2abc\infty} \text{inv}_v\left( \gamma_{c,a} \cup \chi_b \,-\, \chi_{c} \cup \gamma_{a, b} \right) = 0.\]
As $\chi_c$ is locally trivial at all places dividing $2ab\infty$ and $\chi_b$ is locally trivial at all primes dividing $c$, this equation becomes
\begin{equation}
\label{eq:dih_rec_raw}
\sum_{v|c} \text{inv}_v\left(\chi_{c} \cup \gamma_{a, b} \right) = \sum_{v|2ab\infty} \text{inv}_v \left( \gamma_{c, a} \cup \chi_b\right).
\end{equation}
For primes $p$ dividing $c$, $\QQ_p$ will contain $K_{a,b}$ and $L_{a,b}$ will be unramified above $K_{a, b}$. Because of this, $\gamma_{a,b}$ locally becomes either zero or an unramified quadratic character. Then, from the basic properties of $\text{inv}_v$ (see \cite[Ch. XIV]{Serre79}), we have
\[\text{inv}_p\left(\chi_{c} \cup \gamma_{a, b} \right) = \begin{cases} 0 &\text{if } p \text{ splits completely in }L_{a,b}/\QQ\\\frac{1}{2} &\text{otherwise}.\end{cases}\]
Identifying the Galois group of $L_{a,b}/K_{a,b}$ with $\frac{1}{2}\Z/\Z$, we thus have
\[\left[\frac{L_{a,b}/K_{a,b}}{\mathbf{c}} \right]  = \sum_{v|c} \text{inv}_v\left(\chi_{c} \cup \gamma_{a, b} \right).\]

We now look at the other side of \eqref{eq:dih_rec_raw}. At primes dividing $a$ or $c$ but not dividing $2b$, we see $\chi_b$ is locally trivial, so the corresponding invariant will not contribute. So consider a prime $p$ dividing $2b$, and take $t$ to be any integer so $\QQ_p(\sqrt{t})/\QQ_p$ is nonsplit and unramified. Since $c$ is a square mod $p$, we see that $L_{a, c}/K_{a,c}$ is unramified at $p$ and that $\gamma_{a, c}$ is locally a quadratic character. More specifically,
\begin{itemize}
\item If $L_{a,c}/K_{a,c}$ is split, $\gamma_{a, c}$ is locally trivial or is locally equal to $\chi_a$.
\item Otherwise, $\gamma_{a, c}$ is locally equal to either $\chi_t$ or $\chi_{at}$.
\end{itemize}
But we have $\text{inv}_p(\chi_a \cup \chi_b) = 0$ everywhere. Then the same reasoning as before gives
\begin{equation}
\label{eq:rhs_dih_rec}
\left[\frac{L_{a,c}/K_{a,c}}{\mathbf{b}} \right]  = \sum_{v|2ab\infty} \text{inv}_v\left(\gamma_{c, a} \cup \chi_b\right).
\end{equation}
This gives the proposition.
\end{proof}

\begin{exc}
\label{exc:okay_then} \cite{Cors07}
Take $\sqrt{x_b + y_b \sqrt{a}}$ to generate $L_{a, b}$ above $K_{a, b}$, and take $\sqrt{x_c + y_c \sqrt{a}}$ to generate $L_{a, c}$ above $K_{a, c}$. Hilbert reciprocity gives that
\[\prod_v \big(x_b + y_b \sqrt{a}, \,\, x_c + y_c \sqrt{a}\big)_v = +1\]
where the product is over Hilbert symbols at every place of $\QQ(\sqrt{a})$. Use this to give an alternate proof of Proposition \ref{prop:dih_rec}.
\end{exc}

We now apply this theory to study $8$-class groups. We start with theory first studied by R{\'e}dei and Reichardt in \cite{ReRe33}.

Take $\Delta$ to be the discriminant of an imaginary quadratic fields $K_{\Delta} = \QQ(\sqrt{\Delta})$. From Gauss genus theory, the two torsion of the class group $\text{Cl}\, K_{\Delta}$ is represented by the ideals with squarefree norm $b$ dividing $\Delta$. Each element of the class group will be represented by exactly two $b$ due to the relation
\[(\sqrt{-n}) \sim (1)\]
where $n$ is the squarefree part of $-\Delta$.

The $2$-torsion of the dual group
\[\widehat{\text{Cl}}\, K_{\Delta} = \text{Hom}(\text{Cl}\, K_{\Delta}, \mathbb{C})\]
can also be described. Per class field theory, the set of elements of order two in this group correspond to the set of unramified quadratic extensions of $K_{\Delta}$. These consist precisely of the extensions $K_{\Delta/a, \, a}$ where $a$ is a divisor of $\Delta$ so either $a$ or $\Delta/a$ is $1$ mod $4$. Again, two $a$ correspond to each element of $2$-torsion since $K_{\Delta/a, \, a} = K_{a,\, \Delta/a}$.

We have a perfect pairing
\[\text{Cl} \, K_{\Delta} \times \widehat{\text{Cl}} \, K_{\Delta} \rightarrow \mathbb{C}\]
given by the Artin map. The left and right kernels of the associated map
\[\text{Cl} \, K_{\Delta}[2] \times \widehat{\text{Cl}} \, K_{\Delta}[2] \rightarrow \pm 1\]
are then $2(\text{Cl}\,K_{\Delta} [4])$ and $2(\widehat{\text{Cl}}\,K_{\Delta} [4])$. From this, we get that an ideal of norm $b | \Delta$ is represented by the square of an element in $\text{Cl}\, K_{\Delta}$ if
\[(b, \Delta)_v = +1 \text{ at all rational places } v.\]
Similarly, $K_{\Delta/a, \, a}$ corresponds to a square in the dual class group if
\[(a, -\Delta)_v = +1 \text { at all rational places } v.\]
In this case, if we define $L_{\Delta/a,\,  a}$ as at the beginning of this section, it corresponds to an character of order four with square corresponding to $K_{\Delta/a, \, a}$.

We have a final natural pairing 
\[2(\text{Cl} \, K_{\Delta}[4]) \times 2(\widehat{\text{Cl}} \, K_{\Delta}[4]) \rightarrow \pm 1\]
given by $(x, \phi) \mapsto \psi(x)$, where $\psi$ is any character with $\psi^2 = \phi$. Since $x$ is a square, this map is well defined. Furthermore, its left kernel is $4(\text{Cl} \, K_{\Delta}[8])$, whose rank is the $8$-class rank of this field.

Choose $a$ corresponding to an element of $2(\widehat{\text{Cl}} \, K_{\Delta}[4])$, and choose $b$ corresponding to an element of $2(\text{Cl} \, K_{\Delta}[4])$. Take $\mathbf{b}$ to be an ideal of norm $b$ in $K_{\Delta}$. Then the above pairing can be written in the form
\[\langle a, b \rangle_{\Delta} = \left[\frac{L_{\Delta/a,\, a}/K_{\Delta}}{\mathbf{b}}\right].\]
We will reidentify this symbol as lying in the group $\frac{1}{2}\Z/\Z$, rather than in $\pm 1$.

Now, suppose $\Delta'$ is the discriminant of another imaginary quadratic field, with $a, b | \Delta'$. We also assume that $\Delta\Delta'$ is a square in $\QQ_2$ and that we again have $(a, -\Delta')_v = (b, \Delta')_v = +1$ everywhere. Then we can also consider the pairing $\langle a, b \rangle_{\Delta'}$. Writing $c$ for the squarefree part of $\Delta\Delta'$, we have the crucial equality
\begin{equation}
\label{eq:hurrrg}
\langle a, b \rangle_{\Delta'} = \langle a, b\rangle_{\Delta} + \langle a, b \rangle_{ac},
\end{equation}
To prove this, suppose that $L_{c, a}$ and $L_{\Delta/a,\, a}$ are generated by $\sqrt{x_c + y_c\sqrt{a}}$ of norm $c$ and $\sqrt{x + y\sqrt{a}}$ of norm $\Delta/a$ respectively. Then we note that
\[\sqrt{x' + y'\sqrt{a}} = \sqrt{x_c + y_c\sqrt{a}} \cdot \sqrt{x + y\sqrt{a}}\]
has norm $\Delta'$. The only issue is that $x', y'$ may have a common divisor that divides $\Delta$ and $c$ but not $\Delta'$, so that this specific root would not generate an unramified $L_{\Delta'/a,\, a}/K_{\Delta'/a,\, a}$. Fortunately,  scaling $x_c$ and $y_c$ by this divisor solves this problem. We need to check $2$ separately, but this is easy since $c$ equals $1$ mod $8$, so 
\[\QQ_2\left(\sqrt{a}, \sqrt{\Delta}\right) = \QQ_2\left(\sqrt{a}, \sqrt{\Delta'}\right)\]
and
\[\QQ_2^{unr}\left(\sqrt{a}, \sqrt{\Delta}, \sqrt{x + y\sqrt{a}}\right) = \QQ_2^{unr}\left(\sqrt{a}, \sqrt{\Delta'}, \sqrt{x' + y'\sqrt{a}}\right).\] 
Because we now have an inclusion
\[L_{\Delta'/a, \, a} \subset L_{\Delta/a,\, a}L_{c,\, a},\]
we can take
\[\gamma_{\Delta'/a,\, a} = \gamma_{\Delta/a,\, a} + \gamma_{c,a}\]
and equation \eqref{eq:hurrrg} follows easily.

Now, the conditions on $a$ and $b$ are strong enough to force $(a, b)_v$ to equal $+1$ at all places. Then, via Proposition\ref{prop:dih_rec}, \eqref{eq:hurrrg} becomes
\begin{equation}
\label{eq:gov_2c4p}
\langle a, b \rangle_{\Delta'} = \langle a, b\rangle_{\Delta} + \left[ \frac{L_{a, b}}{\mathbf{c}}\right]
\end{equation}
where $\mathbf{c}$ is an ideal of norm $c$ in $K_{a, b}$.

With this, we can easily construct governing fields by taking $\Delta = dp$ and $\Delta' = dp'$, where $p$ and $p'$ are distinct primes not dividing $2d$. It is tedious and unrewarding to determine how the $8$-class group structure varies over a varying $4$-class structure, so we only do this for $p$ and $p'$ coming from the same quadratic residue class.
\begin{prop}
\label{prop:8c_gov}
\cite{Cors07, Rede39}
Take $d$ to be a negative integer, and let $p_0$ be any prime not dividing $2d$ so $\Delta_0 = dp_0$ is the discriminant of a quadratic imaginary field. Let $p_2, \dots, p_r$ be the primes dividing $d$, and take
\[K = \QQ\left(\sqrt{-1}, \sqrt{2}, \sqrt{p_2}, \dots, \sqrt{p_r}\right).\]

Take $p$ to be a prime not dividing $2d$, and write $\Delta = dp_0$. Suppose $pp_0$ is a square mod $8d$; this is true if and only if $p$ and $p_0$ have the same Artin class in $\text{Gal}(K/\QQ)$. Take $L$ to be the composition of all fields $L_{a, b}$, where $a$, $b$ are any integers dividing $d$ so that
\[(a, -\Delta_0)_v = (b, \Delta_0)_v = +1 \text{ at all places } v.\]
Then the isomorphism class of the group $\text{Cl}\, K_{\Delta}[8]$ is determined by the Artin class of $p$ in $\text{Gal}(L/\QQ)$. 
\end{prop}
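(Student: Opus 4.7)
The plan is to reduce the isomorphism type of $\text{Cl}\, K_\Delta[8]$ to its triple of $2^k$-ranks $r_2 \ge r_4 \ge r_8$, where $r_{2^k}$ denotes the $\mathbb{F}_2$-dimension of $2^{k-1}\text{Cl}\, K_\Delta[2^k]$. Since $\text{Cl}\, K_\Delta[8]$ has exponent dividing $8$, its structure
\[ (\Z/8\Z)^{r_8} \oplus (\Z/4\Z)^{r_4 - r_8} \oplus (\Z/2\Z)^{r_2 - r_4} \]
is determined by these three invariants, so it suffices to show each is a function of the Artin class of $p$ in $\Gal(L/\QQ)$.

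By Gauss genus theory $r_2 = \omega(\Delta) - 1 = r$, independent of $p$. For $r_4$, the hypothesis that $pp_0$ is a square mod $8d$, together with $pp_0 > 0$, forces $pp_0 \in (\QQ_v^\times)^2$ at every $v \mid 2d\infty$, and is equivalent to $p$ and $p_0$ sharing an Artin class in $\Gal(K/\QQ)$. I would choose representatives of each class in $\text{Cl}\, K_\Delta[2]$ and $\widehat{\text{Cl}}\, K_\Delta[2]$ by divisors $b, a \mid d$ (using the relation $(\sqrt{-n}) \sim 1$) and verify that $(a, -\Delta)_v = (a, -\Delta_0)_v$ and $(b, \Delta)_v = (b, \Delta_0)_v$ at every place $v$: at $v \nmid pp_0$ this follows from the $2d$-adic squareness of $pp_0$, and at $v \in \{p, p_0\}$ the Hilbert symbols reduce to Legendre symbols $(b/p), (b/p_0), (a/p), (a/p_0)$ whose pairwise equality comes from $\QQ(\sqrt{a}), \QQ(\sqrt{b}) \subset K$ and whose value $+1$ comes from the corresponding genus-theoretic condition at $\Delta_0$. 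Thus the Gauss pairing $\text{Cl}\, K_\Delta[2] \times \widehat{\text{Cl}}\, K_\Delta[2] \to \pm 1$ is identified with its $\Delta_0$-counterpart, forcing $r_4 = r_4(\Delta_0)$ and matching the set of divisors $a, b \mid d$ that represent classes in $2\widehat{\text{Cl}}\, K_\Delta[4]$ and $2\text{Cl}\, K_\Delta[4]$ with the analogous sets for $\Delta_0$.

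For $r_8$, the squarefree part of $\Delta \Delta_0 = d^2 pp_0$ is $c = pp_0$, which is positive, $\equiv 1 \pmod 8$, and coprime to every $a, b \mid d$; the same Artin-class argument gives $(a, c)_v = (b, c)_v = +1$ at every place. Equation \eqref{eq:gov_2c4p} therefore yields
\[ \langle a, b\rangle_\Delta \;=\; \langle a, b\rangle_{\Delta_0} + \left[ \frac{L_{a,b}/K_{a,b}}{\mathbf{c}} \right] \]
for every pair $a, b \mid d$ representing classes in $2\widehat{\text{Cl}}\, K_\Delta[4]$ and $2\text{Cl}\, K_\Delta[4]$. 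Since $p$ splits completely in $K_{a,b}$ (from $(a/p) = (b/p) = +1$) and is unramified in $L_{a,b}$, we may factor $\mathbf{c} = \mathbf{p}\cdot \mathbf{p}_0$ with $\mathbf{p}, \mathbf{p}_0$ primes of $K_{a,b}$ of norm $p, p_0$, so the Artin symbol decomposes as $[L_{a,b}/K_{a,b}/\mathbf{p}] + [L_{a,b}/K_{a,b}/\mathbf{p}_0]$; the second summand is a constant depending only on $d$ and $p_0$, while the first is the Frobenius of $p$ in $\Gal(L_{a,b}/K_{a,b})$, which is determined by the Frobenius of $p$ in $\Gal(L_{a,b}/\QQ)$, and hence by the Artin class of $p$ in $\Gal(L/\QQ)$ since $L \supset L_{a,b}$. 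Consequently the full pairing on $2\widehat{\text{Cl}}\, K_\Delta[4] \times 2\text{Cl}\, K_\Delta[4]$ is a function of that Artin class, as is the dimension $r_8$ of its left kernel $4\text{Cl}\, K_\Delta[8]$.

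The main technical obstacle I anticipate is the Hilbert-symbol bookkeeping in the second paragraph: transferring the $2$- and $4$-torsion structure from $\Delta_0$ to $\Delta$ requires carefully tracking how the single condition ``same Artin class in $K$'' simultaneously reconciles all of the Hilbert symbols at the two varying primes $p$ and $p_0$, and matches up divisors of $d$ on both sides of the Gauss pairing. Once this identification is in hand, the third paragraph is essentially a direct plug-in of \eqref{eq:gov_2c4p}.
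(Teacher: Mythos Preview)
Your proposal is correct and follows essentially the same approach as the paper: both arguments use the representatives $a,b \mid d$ to identify $2(\text{Cl}\,K_\Delta[4])$ and $2(\widehat{\text{Cl}}\,K_\Delta[4])$ with their $\Delta_0$-counterparts via the hypothesis $pp_0 \equiv \square \pmod{8d}$, and then invoke \eqref{eq:gov_2c4p} to see that the pairing $\langle\,\cdot\,,\,\cdot\,\rangle_\Delta$ differs from $\langle\,\cdot\,,\,\cdot\,\rangle_{\Delta_0}$ by Artin symbols in the $L_{a,b}$. Your write-up is more explicit than the paper's in two places (the Hilbert-symbol verification for $r_4$, and the factorization $\mathbf{c}=\mathbf{p}\cdot\mathbf{p}_0$ separating the $p$-dependent and constant contributions), but these are elaborations of the same argument rather than a different route.
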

\begin{proof}
Note that each element of $\text{Cl}\, K_{\Delta_0}[2]$ has a unique representative as a norm $b$ dividing $d$, and each element of $\widehat{\text{Cl}}\, K_{\Delta_0}[2]$ has a unique representative via an $a$ dividing $d$. Let $V_{\text{Tor}}(d, p_0)$ denote the subspace of $\QmodSq$ generated by the $b$ corresponding to $2(\text{Cl}\, K_{\Delta_0}[4])$, and let $V_{\text{Quo}}(d, p_0)$ denote the subspace generated by the $a$ corresponding to $2(\widehat{\text{Cl}}\, K_{\Delta_0}[4])$. Note that, under the conditions of the proposition,
\[V_{\text{Tor}}(d, p_0) = V_{\text{Tor}}(d, p) \quad \text{ and } \quad
V_{\text{Quo}}(d, p_0) = V_{\text{Quo}}(d, p).\]
Call the mutual dimension of these spaces $m$. The pairing
\[ \langle \quad, \,\,\,\,\,\rangle_{\Delta} : V_{\text{Tor}}(d, p_0) \times V_{\text{Quo}}(d, p_0)  \rightarrow \pm 1\]
has a left kernel isomorphic to $4(\text{Cl}\, K_{\Delta}[8])$ and thus determines the $8$-class rank of $K_{\Delta}$. But \eqref{eq:gov_2c4p} implies that the matrix
\[\langle \quad, \,\,\,\,\,\rangle_{\Delta} - \langle \quad, \,\,\,\,\,\rangle_{\Delta_0}. \]
is determined by the Artin symbol $[p, L/\QQ]$.
This proves the proposition
\end{proof}

This proof suggests something stronger than the proposition it proved. In the case that $L/K$ is an extension of degree $2^{m^2}$, we find that every possible $m \times m$ matrix is represented by a unique Artin class $[p, L/\QQ]$ that restricts to $[p_0, K/\QQ]$ in $\text{Gal}(K/\QQ)$. In this case, we can find the distribution of $8$-class ranks as $p$ varies with Chebotarev's density theorem. With this in mind, we define generic choices of $(d, p_0)$.
\begin{defn}
\label{defn:class_gen}
Take $d$ to be a negative integer, and take $p_0$ to be a prime not dividing $2d$ so $\Delta_0 = dp_0$ is the fundamental discriminant of a quadratic field $K_{dp_0}$. We call $(d, p_0)$ \emph{generic} if $V_{\text{Tor}}(d, p_0)$ and $V_{\text{Quo}}(d, p_0)$ are disjoint vector spaces; that is, if there is no nonsquare $a | d$ so that
\[(a, -\Delta_0)_v = (a, \Delta_0)_v = +1\]
holds at all places $v$.
\end{defn}

We will prove in Lemma \ref{lem:generic} that almost all $(d, p_0)$ are generic, justifying the use of the term. $8$-class ranks behave nicely in generic families, as we see with the following proposition.
\begin{prop}
\label{prop:ineff_1}
Take $\Delta_0 = dp_0$ as in Proposition \ref{prop:8c_gov}, and let $L$ and $K$ be as in that theorem. Write $m$ for the $4$-class rank of $\QQ(\sqrt{\Delta_0})$, and take $0 \le j \le m$. Define
\[P^{\text{Mat}}(j \, | \, m) = \frac{ \left| \{M \in M_m(\mathbb{F}_2) : \,\text{corank}(M) = j\}\right|}{\left| M_m(\mathbb{F}_2)\right|}\]
where $M_m(\mathbb{F}_2)$ is the set of $m\times m$ matrices with coefficients in $\mathbb{F}_2$. Write $Y_{N, \,d, \,p_0}$ for the set of primes $p$ less than $N$ so that $pp_0$ is a square mod $8d$. Then, if $(d, p_0)$ is generic, we have that $L/K$ has degree $2^{m^2}$, and we have
\[ \lim_{N \rightarrow \infty} \frac{\left| \{ p \in Y_{N, \,d, \,p_0}:\,\,  4(\text{Cl}\, K_{dp}[8]) = (\Z/2\Z)^j \}\right|}{\left|  Y_{N, \,d, \,p_0} \right|}= P^{\text{Mat}}(j \, | \, m)  .\]
\end{prop}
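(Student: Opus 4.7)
The strategy is to combine Proposition \ref{prop:8c_gov}, which describes the pairing determining the $8$-class rank in terms of Artin symbols in $L/\QQ$, with the Chebotarev density theorem to obtain equidistribution of the relevant Artin class. Fix bases $\{a_1, \dots, a_m\}$ of $V_{\text{Quo}}(d, p_0)$ and $\{b_1, \dots, b_m\}$ of $V_{\text{Tor}}(d, p_0)$, and record the pairing in a matrix $M_\Delta \in M_m(\mathbb{F}_2)$ with $(i,j)$-entry $\langle a_i, b_j\rangle_\Delta$. Since the left kernel of this pairing is identified with $4(\text{Cl}\,K_{dp}[8])$, the $8$-class rank of $K_{dp}$ equals the corank of $M_\Delta$. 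Equation \eqref{eq:gov_2c4p} decomposes $M_\Delta = M_{\Delta_0} + N_p$, where $M_{\Delta_0}$ is fixed and the entries of $N_p$ are Artin symbols $\left[\frac{L_{a_i,b_j}/K_{a_i,b_j}}{\mathbf{c}_{ij}}\right]$ for $\mathbf{c}_{ij}$ an ideal of $K_{a_i,b_j}$ of norm equal to the squarefree part of $\Delta\Delta_0$, so $N_p$ is determined by the Artin class of $p$ in $\Gal(L/\QQ)$.

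The main technical step is to prove that the generic hypothesis forces $[L:K] = 2^{m^2}$. Each $L_{a_i,b_j}K/K$ is genuinely quadratic because $L_{a_i,b_j}$ is either $D_8$ or $\Z/4\Z$ over $\QQ$ with maximal abelian subextension $K_{a_i,b_j} \subset K$, so $L_{a_i,b_j} \cap K = K_{a_i,b_j}$. The issue is thus to show that the $m^2$ resulting quadratic characters $G_K \to \mathbb{F}_2$ are $\mathbb{F}_2$-linearly independent, or equivalently via Kummer theory that the classes $\alpha_{ij} = x_{ij} + y_{ij}\sqrt{a_i}$, satisfying $N_{\QQ(\sqrt{a_i})/\QQ}(\alpha_{ij}) \equiv b_j \pmod{(\QQ^\times)^2}$, are $\mathbb{F}_2$-independent in $K^\times/(K^\times)^2$. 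The genericity condition $V_{\text{Tor}}(d,p_0) \cap V_{\text{Quo}}(d,p_0) = \{1\}$ should be exploited here: a putative nontrivial relation $\prod \alpha_{ij}^{e_{ij}} \in (K^\times)^2$, after being probed by the automorphisms of $\Gal(K/\QQ)$ that flip $\sqrt{a_k}$ and fix the other generators, should be traced down via the norm identity to an element simultaneously lying in $V_{\text{Tor}}$ and $V_{\text{Quo}}$, yielding a contradiction. Making this rigorous, possibly by a more delicate cohomological analysis via inflation-restriction for the extension $G_K \triangleleft G_\QQ$ applied to the $2$-cocycles $\chi_{a_i} \cup \chi_{b_j}$, is the primary obstacle.

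Granting $[L:K] = 2^{m^2}$, the Chebotarev density theorem applied to $L/\QQ$ gives that for primes $p \le N$ with $[p, K/\QQ] = [p_0, K/\QQ]$, which is precisely the condition $pp_0 \in (\Z/8d\Z)^{\times 2}$, the Artin symbols $[p, L/\QQ]$ asymptotically equidistribute over the coset of $\Gal(L/K)$ lying above $[p_0, K/\QQ]$. The identification $\Gal(L/K) \cong \prod_{i,j}\Gal(L_{a_i,b_j}K/K) \cong M_m(\mathbb{F}_2)$ then makes $N_p$ equidistributed over $M_m(\mathbb{F}_2)$ in the limit. Consequently $M_\Delta = M_{\Delta_0} + N_p$ is also equidistributed, and the proportion of matrices with corank $j$ equals $P^{\text{Mat}}(j \,|\, m)$ by definition, giving the claimed limiting density.
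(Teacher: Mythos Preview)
Your overall strategy matches the paper's: reduce to proving $[L:K]=2^{m^2}$, then apply Chebotarev. The setup with the matrix $M_\Delta = M_{\Delta_0} + N_p$ and the conclusion via equidistribution are exactly right.

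The gap is the one you yourself flag: you do not actually prove the degree claim, and the Kummer-theoretic route you sketch (tracking a relation $\prod \alpha_{ij}^{e_{ij}}\in (K^\times)^2$ through norms, or invoking inflation--restriction) is left as an obstacle rather than carried out. The paper bypasses this entirely with a short group-theoretic argument using commutators. Genericity says $V_{\text{Tor}}(d,p_0)\cap V_{\text{Quo}}(d,p_0)=\{1\}$, so the $2m$ elements $a_1,\dots,a_m,b_1,\dots,b_m$ are independent in $\QQ^\times/(\QQ^\times)^2$; hence there exist lifts $\sigma_i,\tau_j\in G=\Gal(L/\QQ)$ with $\sigma_i$ negating only $\sqrt{a_i}$ and $\tau_j$ negating only $\sqrt{b_j}$ among $\{\sqrt{a_1},\dots,\sqrt{a_m},\sqrt{b_1},\dots,\sqrt{b_m}\}$. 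For $(k,l)\ne(i,j)$ at least one of $\sigma_i,\tau_j$ fixes $K_{a_k,b_l}$ and therefore lies in the center of $\Gal(L_{a_k,b_l}/\QQ)$ (the center of $D_8$ or $\Z/4\Z$ being $\Gal(L_{a_k,b_l}/K_{a_k,b_l})$), so $[\sigma_i,\tau_j]$ is trivial on $L_{a_k,b_l}$. On $L_{a_i,b_j}$ itself, neither restriction is central and the commutator is the nontrivial element of $\Gal(L_{a_i,b_j}/K_{a_i,b_j})$. Thus the $m^2$ commutators $[\sigma_i,\tau_j]$ are independent in $\Gal(L/K)$, forcing $|[G,G]|\ge 2^{m^2}$; since $K/\QQ$ is abelian, $[G,G]\subseteq\Gal(L/K)$, and the reverse inequality is automatic.

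This commutator argument is what you should use in place of the Kummer/cohomology sketch; it is both shorter and uses genericity in exactly the right way (to produce the $\sigma_i,\tau_j$). Once you have it, your final paragraph completes the proof.
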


\begin{proof}
Take $G = \text{Gal}(L/\QQ)$. Write $a_1, \dots, a_m$ for a basis of $V_{\text{Quo}}(d, p_0)$, and write $b_1, \dots, b_m$ for a basis of of $V_{\text{Quo}}(d, p_0)$. Let $\sigma_i$ be an automorphism of $L$ that fixes all elements of 
\[\left\{\sqrt{a_1}, \dots, \sqrt{a_m}, \sqrt{b_1}, \dots, \sqrt{b_m}\right\}\]
other than $\sqrt{a_i}$, which is sent to $-\sqrt{a_i}$. Define $\tau_j$ to fix everything in this set besides $\sqrt{b_j}$, which is sent to $-\sqrt{b_j}$. The existence of such $\sigma_i$ and $\tau_j$ comes from the definition of a generic $(d, p_0)$.

But then the commutator $[\sigma_i, \tau_j]$ fixes all $L_{a_l, \,b_k}$ other than $L_{a_i, \,b_j}$, and is the unique nontrivial automorphism of $L_{a_i, \,b_j}/K_{a_i, \,b_j}$. From this, we find that $[G, G]$ has order $2^{m^2}$, being generated by the $m^2$ commutators $[\sigma_i, \tau_j]$. $K/\QQ$ is abelian, so $L/K$ then has its maximal possible degree of $2^{m^2}$. Each possible Artin class $[p, L/\QQ]$ corresponds uniquely to a matrix $\langle \quad, \,\,\,\,\,\rangle_{dp}$, and the rest of the proposition follows by the Chebotarev density theorem.
\end{proof}

\section{Governing Fields for $4$-Selmer groups}
\label{sec:4sel}
In this section, we will consider the Cassels-Tate pairing between the $2$-Selmer group of an abelian variety $A/k$ and its dual variety $A^{\vee}$, where $k$ is a number field. Let $A^{(d)}$ denote the quadratic twist of $A$ by the quadratic character $\chi_d$. Notice that $A[2]$ and $A^{(d)}[2]$ are isomorphic over $k$, so we have an identification
\[H^1(G_k, A[2]) \cong H^1(G_k, A^{(d)}[2])\]
where $G_k$ is the absolute Galois group of $k$. In particular, it makes sense to ask if elements $F \in H^1(G_k, A[2])$ and $F' \in H^1(G_k, A^{\vee}[2])$ lie in the $2$-Selmer groups of $A^{(d)}$ and $(A^{(d)})^{\vee}$ respectively. If so, we denote the value of the Cassels-Tate pairing of $F$ and $F'$ over $A^{(d)}$, $(A^{(d)})^{\vee}$ by
\[\langle F, F' \rangle_{A^{(d)}}.\]
This pairing will take values in $\frac{1}{2}\Z/\Z$. It will \emph{usually} depend on the specific value of $d$. However, the pairing will be alternating with respect to a polarization coming from a $k$-rational divisor \cite{Poon99}, and we do not expect this property to be affected by a quadratic twist. We account for this situation with the following lemma.
\begin{lem}
\label{lem:alt_anti}
Take $A/k$ to be an abelian variety with full $2$-torsion over $k$. Take $(\,\,\,\, , \,\,\,)_{\text{Weil}}$ to be the Weil pairing 
\[(\,\,\,\, , \,\,\,)_{\text{Weil}}: A[2] \times A^{\vee}[2] \rightarrow \pm 1,\]
and use this to define a cup product 
\[\cup: H^1(G_k, A[2]) \times H^1(G_k, A^{\vee}[2]) \rightarrow H^2(G_k, \pm 1).\]
Take $F \in H^1(G_k, A[2])$ and $F' \in H^1(G_k, A^{\vee}[2])$.  Call $F \cup F'$ antisymmetric if
\[(F(\sigma), F'(\tau))_{\text{Weil}} = (F(\tau), F'(\sigma))_{\text{Weil}}\]
for all $\sigma, \tau \in G_k$. If $F \cup F'$ is antisymmetric, call it alternating if
\[(F(\sigma), F'(\sigma)) = +1\]
for all $\sigma$. Write $K/k$ for the minimal field over which $F$ and $F'$ are trivial cocycles.

Then the cocycle $F \cup F'$ is zero in $H^2(\text{Gal}(K/k), \pm 1)$ if and only if it is alternating. Taking $k^{ab}$ to be the maximal abelian extension of $k$, we also find that $F \cup F'$ can only be zero in $H^2(\text{Gal}(k^{ab}/k), \pm 1)$ if it is antisymmetric.
\end{lem}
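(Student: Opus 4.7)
The plan is to first identify the structure of $H := \text{Gal}(K/k)$ explicitly. Because $A$ has full $2$-torsion over $k$, the Galois action on $A[2]$ and $A^{\vee}[2]$ is trivial, so the cocycles $F, F'$ are just homomorphisms from $G_k$. Since $F$ and $F'$ are trivialized by $K$ and $K$ is minimal with this property, the map $F \oplus F' : H \to A[2] \oplus A^{\vee}[2]$ is an injection. In particular, $H$ is an elementary abelian $2$-group, and the cocycle $c(\sigma, \tau) := (F(\sigma), F'(\tau))_{\text{Weil}}$ becomes an $\mathbb{F}_2$-bilinear form on $H$.

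For the alternating $\Rightarrow$ coboundary direction of the first claim, I would fix an $\mathbb{F}_2$-basis $e_1, \dots, e_n$ of $H$ and define a quadratic refinement
\[\eta\Big(\sum_i a_i e_i\Big) = \sum_{i < j} a_i a_j \, c(e_i, e_j).\]
A short bookkeeping check in $\mathbb{F}_2$, using bilinearity together with the fact that an alternating bilinear form in characteristic $2$ is automatically symmetric, will show that $d\eta(\sigma, \tau) = \sum_{i<j}(a_i b_j + a_j b_i) c(e_i, e_j) = c(\sigma, \tau)$, so $c$ is a coboundary on $H$.

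For the converse direction, I would argue by restriction: if $c = d\eta$ in $H^2(H, \pm 1)$, then for any $\sigma \in H$ the restriction of $c$ to the cyclic subgroup $\langle \sigma \rangle \cong \mathbb{Z}/2\mathbb{Z}$ is the coboundary of $\eta|_{\langle \sigma \rangle}$. After normalizing so that $\eta'(1) = 0$, the only possible coboundary value is $d\eta'(\sigma, \sigma) = 2\eta'(\sigma) - \eta'(1) = 0$ in $\mathbb{F}_2$, which forces $c(\sigma, \sigma) = 0$. Hence $c$ is alternating.

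For the second claim, since $F$ and $F'$ are homomorphisms into abelian groups they both factor through $G_k^{\text{ab}} = \text{Gal}(k^{\text{ab}}/k)$, so $c$ descends to a $2$-cocycle there. If $c = d\eta$ on $G_k^{\text{ab}}$, then since $\sigma\tau = \tau\sigma$ in $G_k^{\text{ab}}$ we have
\[d\eta(\sigma, \tau) = \eta(\sigma) + \eta(\tau) - \eta(\sigma\tau) = \eta(\tau) + \eta(\sigma) - \eta(\tau\sigma) = d\eta(\tau, \sigma),\]
which yields exactly the antisymmetry $c(\sigma, \tau) = c(\tau, \sigma)$. The only delicate step in this plan is the explicit construction of $\eta$ in the alternating case; once one carefully distinguishes the $\mathbb{F}_2$-coefficients of $\sigma + \tau$ from the genuine bilinear expansion of $c(\sigma, \tau)$, and uses that alternation forces $c(e_i, e_i) = 0$ and $c(e_j, e_i) = c(e_i, e_j)$, the identity $d\eta = c$ drops out without incident.
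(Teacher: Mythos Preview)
Your proposal is correct and follows essentially the same approach as the paper. Both arguments identify $\text{Gal}(K/k)\cong(\Z/2\Z)^r$, pick a basis, and build the trivializing cochain as the quadratic refinement $\eta(\sum a_ie_i)=\sum_{i<j}a_ia_jc(e_i,e_j)$; the paper writes this same cochain concretely as $\sum_{i>j}A_{ij}\kappa_{ij}$ with $\kappa_{ij}(\sigma)=a_ia_j$. For the converse and the antisymmetry claim, both you and the paper restrict to a single $\Z/2\Z$ to force $c(\sigma,\sigma)=0$ and use commutativity to force $d\eta(\sigma,\tau)=d\eta(\tau,\sigma)$.
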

\begin{proof}
Note that $K/k$ is a $(\Z/2\Z)^r$ extension for some $r$. Choose some basis $g_1, \dots, g_r$ for this group, and take $A$ to be the $r \times r$ matrix with coefficients in $\mathbb{F}_2$ given by
\[A_{ij} = \frac{1}{2}\big(1 - (F(g_i), F'(g_j))_{\text{Weil}} \big).\]
Take $K = k(\sqrt{c_1}, \dots, \sqrt{c_r})$ with $g_i$ fixing all $\sqrt{c_j}$ besides $\sqrt{c_i}$. Take $\kappa_{ij}: G_k \rightarrow \pm 1$ to be the $1$ cochain mapping $\sigma$ to $-1$ if and only if $\sigma(\sqrt{c_i}) = -\sqrt{c_i}$ and $\sigma(\sqrt{c_j}) = -\sqrt{c_j}$. Then, if $F \cup F'$ is alternating, $A$ is also alternating, and the $1$ cochain
\[\sum_{i > j} A_{ij} \kappa_{ij}\]
has coboundary equal to $F \cup F'$. On the other hand, suppose any diagonal entry of $A$ was nonzero; say $A_{11}$ was nonzero. Then the restriction of $F \cup F'$ to the group $\{1, g_1\}$ is the nontrivial element of $H^2(\Z/2\Z, \Z/2\Z)$, and $F \cup F'$ is thus nontrivial as a cocycle over $\text{Gal}(K/k)$. Finally, if $F \cup F'$ is not antisymmetric, we see that no cochain $\Gamma$ defined on an abelian extension of $k$ can have coboundary $F \cup F'$, as this would imply
\[(F(\sigma), F'(\tau))_{\text{Weil}} \cdot (F(\tau), F'(\sigma))_{\text{Weil}} = \Gamma(\sigma \tau)\Gamma(\tau\sigma) = +1.\]
everywhere.
\end{proof}

With this lemma out of the way, we can construct governing fields for $4$-Selmer groups.
\begin{thm}
\label{thm:4sel_gov}
Take $A$ to be an abelian variety above a number field $k$. Assume $A$ has full $2$-torsion over $k$. Take
\[F \in H^1(G_k, A[2]), \quad F' \in H^1(G_k, A^{\vee}[2]).\]
Take $K/k$ to be the minimal field over which $F$ and $F'$ are trivial.

Next, take $S$ to be any set of places of $k$ with the following properties:
\begin{itemize}
\item $S$ contains all places of bad reduction of $A$.
\item $S$ contains all archimedean primes and all primes dividing $2$.
\item $S$ contains enough primes to generate the quotient $\text{Cl}\, k /2\text{Cl}\, k$ of the class group of $k$.
\end{itemize}

Now take $\mathscr{D}$ to be the set of pairs $(d_1, d_2)$ of elements in $k^{\times}$ such that
\begin{itemize}
\item $d_1/d_2$ is a square at all places of $S$ and
\item $F$ and $F'$ are $2$-Selmer elements of $A^{(d_i)}$, $\left(A^{\vee}\right)^{(d_i)}$ respectively for $i = 1, 2$.
\end{itemize}

Then, if $F \cup F'$ is alternating in the sense detailed above, we have
\[\langle  F, F' \rangle_{A^{(d_1)}} = \langle  F, F' \rangle_{A^{(d_2)}} \]
for all $(d_1, d_2) \in \mathscr{D}$. 

If $F \cup F'$ is not alternating, there is a quadratic extension $L$ of $K$ that is ramified only at primes in $S$ such that
\[\langle F, F' \rangle_{A^{(d_1)}} = \langle F, F' \rangle_{A^{(d_2)}} + \left[ \frac{L/K}{\mathbf{d}} \right]\]
for all $(d_1, d_2) \in \mathscr{D}$, where the Galois group $\text{Gal}(L/K)$ is identified here with $\frac{1}{2}\Z/\Z$. In this equation, $\mathbf{d}$ is any ideal of $K$ coprime to the conductor of $L/K$ that has norm $(d_1/d_2)\mathbf{b}^2$ in $k$ for some ideal $\mathbf{b}$ of $k$; such a $\mathbf{d}$ exists for any $(d_1, d_2)$ in $\mathscr{D}$.
\end{thm}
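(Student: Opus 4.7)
The strategy parallels the proof of Proposition \ref{prop:dih_rec}. I would begin with an explicit cocycle formula for the Cassels-Tate pairing of Poonen-Stoll type: for a $2$-Selmer element $F$ of $A^{(d)}$, choose local lifts $\tilde{F}_v \in H^1(G_{k_v}, A^{(d)})$, and express $\langle F, F' \rangle_{A^{(d)}}$ as $\sum_v \text{inv}_v$ of a local cup product between these lifts and $F'$ under the Weil pairing. Since $d_1/d_2$ is a square at every place in $S$, the twists $A^{(d_1)}$ and $A^{(d_2)}$ become canonically isomorphic on $S$, so the lifts can be chosen compatibly there and the corresponding local contributions agree. At places outside $S$, both $F$ and $F'$ are unramified, and the discrepancy between the twists is captured cleanly by the quadratic character $\chi_{d_1/d_2}$.

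I would then show that, after rearranging, the difference of pairings takes the form
\[
\langle F, F' \rangle_{A^{(d_1)}} - \langle F, F' \rangle_{A^{(d_2)}} = \sum_v \text{inv}_v\big(\chi_{d_1/d_2} \cup \gamma_{F,F'}\big),
\]
where $\gamma_{F,F'}$ is any $1$-cochain with coboundary $F \cup F'$ under the Weil pairing (this is the analog of the cochain $\gamma_{a,b}$ from Section \ref{sec:8class}). Global reciprocity forces the total sum over all places of $k$ to vanish, leaving only the contributions at primes ramifying in $k(\sqrt{d_1/d_2})/k$. I would then split into cases via Lemma \ref{lem:alt_anti}. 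If $F \cup F'$ is alternating, the lemma furnishes a $\gamma_{F,F'}$ defined on $\text{Gal}(K/k)$; because $\chi_{d_1/d_2}$ is trivial at every place of $S$ while $\gamma_{F,F'}$ is unramified outside $S$, each surviving local invariant vanishes and the pairings coincide. If $F \cup F'$ is not alternating, Lemma \ref{lem:alt_anti} prevents such a $\gamma_{F,F'}$ from existing on $\text{Gal}(K/k)$, but it exists on the Galois group of a quadratic extension $L/K$, which I would choose ramified only at $S$ by adjusting the square-root generator by $S$-units and invoking the Selmer condition on $F, F'$. The surviving local invariants at a prime $\mathfrak{p}$ of $K$ dividing $d_1/d_2$ then become the Artin symbol $\left[\frac{L/K}{\mathfrak{p}}\right]$, and summing over these primes yields the claimed formula for the appropriate $\mathbf{d}$.

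The main obstacle will be establishing the cup-product identity for the difference of pairings cleanly. This requires tracking how a global lift $\tilde{F} \in H^1(G_k, A^{(d)})$ depends on $d$: the lifts for $d_1$ and $d_2$ agree over $k(\sqrt{d_1 d_2})$ but differ globally by a class whose interaction with $F'$ via the Weil pairing must be identified with $\chi_{d_1/d_2} \cup \gamma_{F,F'}$. Verifying this — together with checking that changes in the local Selmer conditions at places ramifying in $k(\sqrt{d_1/d_2})/k$ do not introduce extraneous terms — is the technical heart of the argument. Once the identity is in hand, the construction of $L$ and verification of its ramification properties follow the template of the $L_{a,b}$ construction in Section \ref{sec:8class} with essentially only notational modifications, and existence of the auxiliary ideal $\mathbf{d}$ of the stated norm follows from the hypothesis that $S$ contains generators for $\text{Cl}\, k / 2\text{Cl}\, k$.
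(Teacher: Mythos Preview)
Your outline matches the paper's approach: both reduce the difference of Cassels--Tate pairings to a sum of local invariants of $\chi_{d} \cup \Gamma$ with $d\Gamma = F \cup F'$, then invoke Lemma~\ref{lem:alt_anti} to decide whether $\Gamma$ lives over $K$ or over a quadratic extension $L$. The paper derives the difference formula by lifting $F$ to a cochain $\beta_1$ valued in $A^{(d_1)}[4]$, transporting it along the twist isomorphism $\phi$, and computing $\phi\,d\beta_1 - d(\phi\beta_1) = \chi_d \cup F$ directly; this yields \eqref{eq:ct_dif}, which carries an additional local term $(\beta_{v2} - \phi\beta_{v1})\cup F'_v$ that is then killed place by place.

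Two points in your sketch need repair. First, the appeal to ``global reciprocity'' is misplaced: $\chi_d \cup \Gamma$ is not a global $2$-cocycle (its coboundary is $\chi_d \cup F \cup F'$), so no Brauer-group reciprocity applies, and in any case if the total sum vanished there would be nothing ``left''. The paper instead shows by direct inspection that each local summand vanishes at places in $S$ and at places outside $S$ with $v(d)$ even. Second, and more substantively, you assert without argument that $F$ and $F'$ are unramified outside $S$, and for the Artin-symbol interpretation you implicitly need them to be \emph{trivial} at every $v\notin S$ with $v(d)$ odd (so that $K/k$ splits completely there, $\Gamma$ becomes locally an unramified quadratic character, and the ideal $\mathbf{d}$ exists in $K$). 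This is not automatic: at such $v$ the twist $A^{(d_i)}$ has bad reduction, and the Selmer condition alone does not visibly force $F_v$ into $H^1_{\mathrm{unr}}$. The paper supplies a local Tate-duality argument here: the Kummer image of $A^{(d_i)}[2]$ at $v$ maps bijectively onto $H^1(I_{k_v},A[2])$, and orthogonality of $F'$ against this image, combined with the fact that the local Selmer subspace for $(A^{\vee})^{(d_i)}$ meets $H^1_{\mathrm{unr}}$ trivially, forces $F'_v=0$ (and symmetrically $F_v=0$). Without this step your final two sentences do not go through.
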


\begin{proof}
We use the Weil pairing definition of the Cassels-Tate pairing as given in \cite[I.6.9]{Milne86} and \cite[12.2]{Poon99}. Choose 
\[\beta_1 \in \text{Map}(G_k, \,A^{(d_1)}[4])\]
so that $2\beta_1 = F$.  Then the coboundary $d\beta_1$ lies in $H^2(G_k, A^{(d_1)}[2])$. Using the cup product
\[\cup: H^2(G_k, A[2]) \times H^1(G_k, A^{\vee}[2]) \rightarrow H^3(G_k, \pm 1)\]
coming from the Weil pairing, we can define a $3$-cocycle $d\beta_1 \cup F'$. Taking $k_s$ to be the separable closure of $k$, we know 
\[H^3(G_k, k_s^{\times}) = 0,\]
so $d\beta_1 \cup F' = d\epsilon_1$ for some $2$-cochain $\epsilon_1$. Finally, for every place $v$, there is some
\[\beta_{v1} \in \text{ker}\big(H^1\left(G_{k_v}, A^{(d_1)}[4]\right) \rightarrow H^1\left(G_{k_v}, A^{(d_1)}\right)\big)  \]
with $2\beta_{v1} = F_v$. Then $(\beta_{1v} - \beta_{v1}) \cup F'_v  - \epsilon_{1v}$ is a $2$-cocycle, and we define
\[\langle F, F' \rangle_{A^{(d_1)}} = \sum_v \text{inv}_v\big((\beta_{1v} - \beta_{v1}) \cup F'_v  - \epsilon_{1v}\big).\]

Write $d = d_1d_2$, and take $\phi: A^{(d_1)} \rightarrow A^{(d_2)}$ to be the $k(\sqrt{d})$ isomorphism coming from the twist. For the Cassels-Tate pairing over $A^{(d_2)}$, we need
\[\beta_2 \in \text{Map}(G_k, \,A^{(d_1)}[4])\]
with $2\beta_2 = F$. Such a $\beta_2$ is provided by $\phi \beta_1$, so we set $\beta_2 = \phi \beta_1$. Then we need to find an $\epsilon_2$ so
\[d(\epsilon_1 - \epsilon_2) = (\phi d\beta_1 - d\phi \beta_1) \cup F'.\]
But
\begin{align*}
& (\phi d\beta_1 - d\phi \beta_1)(\sigma, \tau) \\
& = \phi\big(\sigma \beta_1(\tau)  \,\,\,\,\,\, -\beta_1(\sigma\tau) \,\, + \beta_1(\sigma)\big) \\
& \,\quad - \big(\sigma \phi \beta_1(\tau) - \phi\beta_1(\sigma\tau) + \phi\beta_1(\sigma) \big) \\
& = (\phi \sigma - \sigma \phi) \beta_1(\tau) \\
&  = \begin{cases} \sigma F(\tau) &\text{ if } \sigma(\sqrt{d}) = -\sqrt{d} \\ 0 & \text{ otherwise.} \end{cases}
\end{align*}
Since the Galois action of $G_k$ on $A[2]$ is trivial, we can thus write
\begin{equation}
\label{eq:eps_dif}
\phi d\beta_1 - d\phi \beta_1 = \chi_d \cup F
\end{equation}
where $\chi_d$ is the quadratic character corresponding to $k(\sqrt{d})/k$ and where the cup product comes from the unique nontrivial bilinear map $\Z/2\Z \times \Z/2\Z \rightarrow \Z/2\Z$.

Per \cite[Lemma I.6.15]{Milne86}, $\text{inv}_v(F \cup F')$ is everywhere zero. Since the map
\[H^2(G_k, k_s^{\times}) \rightarrow \bigoplus_v H^2(G_{k_v}, \overline{k}_v^{\times})\]
is injective, $F \cup F'$ is the coboundary of some $1$-cochain 
\[\Gamma: G_k \rightarrow k_s^{\times}.\] 
Then we can take
\[\epsilon_2  =\epsilon_1 \,-\,\chi_d \cup \Gamma .\]
We have
\[\text{inv}_v\big( (\beta_{1v} - \beta_{v1}) \cup F'_v  - \epsilon_{1v}\big) = \text{inv}_v\big( (\phi\beta_{1v} - \phi\beta_{v1}) \cup F'_v  - \epsilon_{1v}\big)\]
and can thus write
\begin{equation}
\label{eq:ct_dif}
\langle F, F' \rangle_{A^{(d_1)}} - \langle F, F' \rangle_{A^{(d_2)}} = \sum_v \text{inv}_v\big((\beta_{v2} - \phi\beta_{v1}) \cup F'_v + \chi_d \cup \Gamma\big).
\end{equation}

We want to find as simple a $\Gamma$ as possible. From the exact sequence 
\[1 \xrightarrow{\quad\,} \pm 1 \xrightarrow{\quad\,} k_s^{\times} \xrightarrow{\,\,\,2\,\,\,} k_s^{\times} \xrightarrow{\quad\,} 1,\]
we get that
\[H^1(G_k, k_s^{\times}) \rightarrow H^2(G_k, \pm 1) \rightarrow H^2(G_k, k_{s}^{\times})\]
is exact. From Hilbert 90, we get that the second map in this sequence must be injective. Then we can assume that $\Gamma$ takes values in $\pm 1$.

Suppose $\Gamma$ is defined on an extension $L_0/K$ with $L_0$ Galois over $k$.  Take $G$ to be $\text{Gal}(L_0/k)$ and $N$ to be the subgroup of $G$ fixing $K$. For $n \in N$ and $g \in G$, we have $\Gamma(n) + \Gamma(g) = \Gamma(ng)$, as $d\Gamma$ is defined on $G/N$. Then $\Gamma$ is a character on $N$; take $N_1$ to be its kernel in $N$. Take $n \in N_1$. Then
\[\Gamma(gng^{-1}) \,=\, (F \cup F')(g, ng^{-1}) + \Gamma(g) + \Gamma(ng^{-1}) \]
\[ \quad\,\,\,= (F \cup F')(g, g^{-1}) + \Gamma(g) + \Gamma(g^{-1})\, =\, \Gamma(gg^{-1}),\]
which is the identity. Because of this, $N_1$ must be normal in $G$. Furthermore, $\Gamma$ is defined on $G/N_1$. We can then assume that $\Gamma$ is either defined on $K$ or on a quadratic extension $L_0$ of $K$ that is Galois over $k$.

In the latter case, write $L_0 = K(\sqrt{\alpha})$. Write $\mathfrak{d}$ for the product of primes in $K$ that ramify in $L_0/K$ but that are not above a ramified prime of $K/k$. We see that $\mathfrak{d}$ is preserved by the action of $\text{Gal}(K/k)$. Since no prime dividing $\mathfrak{d}$ ramifies in $K/k$, $\mathfrak{d}$ is an ideal of $k$. Choose $\alpha_0 \in k^{\times}$ and $\mathbf{b}$ a nonzero ideal of $k$ so $(\alpha_0) \mathfrak{d}\mathbf{b}^2$ is an ideal generated by primes in $S$, and replace $\Gamma$ with $\Gamma + \chi_{\alpha_0}$. This change does not alter the coboundary of $\Gamma$, but it changes the cochain's field of definition to $L = K(\sqrt{\alpha\alpha_0})/k$. Then $\Gamma$ is now defined on a field $L/K$ that is potentially ramified only at the places in $S$.

Suppose that $v$ is a place outside $S$ where $\QQ(\sqrt{d_i})$ ramifies. We claim that $F$ and $F'$ are trivial over $k_v$. For take $k_4/k$ to be the minimal field such that $G_{k_4}$ acts trivially on $A[4]$ and $A^{\vee}[4]$. Then $k_4/k$ is unramified at $v$ since $v$ is outside $2$ and the primes of bad reduction of $A$ (see \cite[Theorem 1]{SeTa68}, for example). The image of $A[2]$ in the connecting morphism
\[H^0(G_{k_v}, A) \rightarrow  H^1(G_{k_v}, A[2])\]
thus consists of unramified cocycles. Take $I_{k_v}$ to be the inertia group of $G_{k_v}$. Then we have that, via the composition
\[H^0(G_{k_v}, A^{(d_i)}) \rightarrow  H^1(G_{k_v}, A^{(d_i)}[2]) \rightarrow  H^1(I_{k_v}, A^{(d_i)}[2]),\]
$A^{(d_i)}[2]$ maps bijectively to  $H^1(I_{k_v}, A^{(d_i)}[2])$. Since the Weil pairing is nondegenerate, and since
\[\text{inv}_v(T \cup F') = 0\]
for $T \in H^1(G_{k_v}, A^{(d_i)}[2])$ coming from $H^0(G_{k_v}, A^{(d_i)})$ \cite[Lemma I.6.15]{Milne86}, and since $K$ is unramified at $v$, we have that $F'$ is trivial over $k_v$. The same argument implies that $F$ is trivial over $k_v$. In particular, $K/k$ splits completely at all primes with odd multiplicity in either $d_i$ outside $S$. With our conditions on $\mathscr{D}$, we thus can prove the existence of $\mathbf{d}$ in $K$ with norm $(d_1/d_2)\mathbf{b}^2$ in $k$.

We return to \eqref{eq:ct_dif}. The summand is the invariant of an unramified cocycle at all places outside $S$ that divide neither $d_i$. It is hence zero at such places.

At places in $S$, we see that $\phi$ is an isomorphism. We can thus take $\beta_{v2} = \phi \beta_{v1}$ at such a place. Furthermore, $\chi_d$ is trivial at such a place since $d_1/d_2$ is a local square. These places thus do not contribute.

At places $v$ outside $S$ that divide $d$ with even multiplicity,  we see that $\chi_d$ and $\Gamma$ are both locally unramified. If the multiplicity of $v$ is even in both $d_1$ and $d_2$, we have that $A^{(d_1)}$ and $A^{(d_2)}$ have good reduction at $v$, so $\beta_{v2} - \phi \beta_{v1}$ and $F'_v$ are unramified. If the multiplicity is odd for both, we have that $F'_v$ is trivial. Either way, these summands do not contribute.

We are left with finite places $v$ that divide $d$ with odd multiplicity. At such places, $F'_v$ is trivial. Then \eqref{eq:ct_dif} has become
\begin{equation}
\label{eq:ct_dif_2}
\langle F, F' \rangle_{A^{(d_1)}} - \langle F, F' \rangle_{A^{(d_2)}} = \sum_{v \text{ with } v(d) \text{ odd}} \text{inv}_v(\chi_d \cup \Gamma).
\end{equation}

We now consider the impact of the symmetry of $F \cup F'$ on this equation using Lemma \ref{lem:alt_anti}. In the alternating case, $\Gamma$ is defined over $K$ and is hence zero at all places being summed over in the above equation. Then, if $F \cup F'$ is alternating,
\[\langle F, F' \rangle_{A^{(d_1)}}  = \langle F, F' \rangle_{A^{(d_2)}}. \]
Otherwise, $\Gamma$ is defined over a quadratic extension $L/K$, and the above equation counts the number of places of odd valuation in $d$ where $L/K$ is nonsplit. This is precisely what is calculated with the Artin symbol
\[ \left[ \frac{L/K}{\mathbf{d}} \right],\]
and we have the theorem.
\end{proof}

\subsection{Theorem \ref{thm:4sel_gov} for elliptic curves over $\QQ$}
In this subsection, we specialize Theorem \ref{thm:4sel_gov} to the case of an elliptic curve $E/\QQ$ with full $2$-torsion. In this case, the analogy between $4$-Selmer group and $8$-class groups becomes particularly simple. We will be able to define the notion of a generic twist $E^{(dp_0)}$ as we had defined generic discriminants $\Delta_0 = dp_0$.

As a starting point, we consider an alternative approach to Theorem \ref{thm:4sel_gov} for this special case. In the proof, we were able to find $\Gamma$ defined over a quadratic extension $L$ of $K$ using Hilbert 90 and a basic group theory trick. This avoids some computational hurdles, especially for higher dimensional abelian varieties, but it also is an abstract way to construct $L/K$, and it partially obstructs the analogy with the governing field of the $8$-class group.

With this in mind, we give an alternative approach to finding $\Gamma$. Take $E/\QQ$ to be an elliptic curve with full $2$-torsion. Take $t_1$ and $t_2$ to be generators for the $2$-torsion. Then there are squarefree integers $a_1$, $a_2$, $a_1'$, and $a_2'$ so that we can write the Selmer elements $F$ and $F'$ in the form
\[F(\sigma) = \chi_{a_1}(\sigma)t_1 + \chi_{a_2}(\sigma)t_2 \quad\text{ and } \quad F'(\sigma) = \chi_{a'_1}(\sigma)t_1 + \chi_{a'_2}(\sigma)t_2\]
Write $a_3$ and $a'_3$ for the squarefree part of $a_1a_2$ and $a'_1a'_2$ respectively. The tuples $(a_1, a_2, a_3)$ and $(a'_1, a'_2, a'_3)$ are the objects of choice for classical two descent (see \cite{Swin08}, for example). In this case, we see the equation
\[\text{inv}_v(F \cup F') = +1\]
is equivalent to the relation
\[(a_1, a'_1)_v \cdot (a_2, a'_2)_v \cdot (a_3, a'_3)_ = +1\]
of Hilbert symbols at any place $v$. From this, we can find $b \in \QQ^{\times}$ so that
\[(a_1, ba'_1)_v = (a_2, ba'_2)_v = (a_3, ba'_3)_v = +1\]
at all places. With Hasse-Minkowski, we can then find $x_i, y_i, z_i \in \QQ^{\times}$ so
\[x_i^2 - a_iy_i^2 = ba'_i\,z_i^2.\]
Assume that $K_{F, F'} = \QQ(\sqrt{a_1}, \sqrt{a_2}, \sqrt{a'_1}, \sqrt{a'_2})$ is of degree 16 above $\QQ$. Then we find that
\begin{equation}
\label{eq:deg32}
L_{F, F'} = K_{F, F'}\left(\sqrt{(x_1 + y_1\sqrt{a_1})(x_2 + y_2\sqrt{a_2})(x_3 + y_3\sqrt{a_3})}\right)
\end{equation}
is a degree 32 Galois extension of $\QQ$. We can scale $x_1$, $y_1$, and $z_1$ by a common factor to avoid ramification at places unramified in $K/\QQ$. Then there is a $\Gamma$  defined over $L_{F, F'}$ that has coboundary $F \cup F'$, and $L_{F, F'}/\QQ$ serves as a governing field as in Theorem \ref{thm:4sel_gov}.

Next, given an odd prime $p_0$ not dividing the conductor of $E$, we will define a vector space
\[W_{\text{SD}}(E, p_0) \subseteq \QmodSq \,\times\, \QmodSq\]
that is isomorphic to the quotient of $\SelT(E^{(p_0)})$ by the image of $E[2]$ in the map
\[H^0(G_{\QQ}, E) \rightarrow H^1(G_{\QQ}, E[2]).\]
For $F \in \SelT(E^{(p_0)})$, there is a unique element $F_{\text{tor}}$ coming from $E[2]$ so that $F + F_{\text{tor}}$ is unramified as a cocycle at $p_0$. Then $F + F_{\text{tor}}$ corresponds to a tuple $(a_1, a_2, a_3)$ with none of the $a_i$ divisible by $p_0$. With this setup, we say that $F$ maps to $(a_1, a_2)$ in $W_{\text{SD}}(E, p_0)$, and we take $W_{\text{SD}}(E, p_0)$ to be the minimal space containing all the tuples corresponding to $2$-Selmer elements

Note that, if $pp_0$ is a square mod $8C$, where $C$ is the conductor of $E$, then
\[W_{\text{SD}}(E, p_0) = W_{\text{SD}}(E, p).\]
Thus, with this space, we quickly get the analogue of Proposition \ref{prop:8c_gov}.
\begin{prop}
\label{4s_gov}
Take $E/\QQ$ to be an elliptic curve with full $2$-torsion, and take $C$ to be the conductor of $E$. Take $K/\QQ$ to be the minimal field containing $\sqrt{d}$ for all divisors $d$ of $2C$. Take $p_0$ to be a prime not dividing $2C$, and suppose $p$ is any other prime so $pp_0$ is a square mod $8C$. In this case, we get $[p, K/\QQ] = [p_0, K/\QQ]$.

For distinct tuples $(a_1, a_2)$ and $(a'_1, a'_2)$ in $W_{\text{SD}}(E, p_0)$, find $L_{F, F'}$ as in \eqref{eq:deg32}, and take $L$ to be the minimal extension of $K$ containing all these extensions. 

Then the Cassels-Tate pairing on $W_{\text{SD}}(E, p)$, and thus the rank of $\text{Sel}^{(4)}(E^{(p)})$, is determined uniquely by the symbol $[p, L/\QQ]$.
\end{prop}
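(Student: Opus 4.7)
The plan is to deduce the proposition from Theorem \ref{thm:4sel_gov} applied with $k = \QQ$, $A = E$, $d_1 = p_0$, $d_2 = p$, and $S = \{\infty, 2\} \cup \{\ell : \ell \mid C\}$. Since $\QQ$ has trivial class group, $S$ meets the hypotheses of that theorem. The pair $(p_0, p)$ lies in $\mathscr{D}$ precisely when $p_0/p$ is a square in $\QQ_v^{\times}$ for every $v \in S$: at the archimedean place this holds because both primes are positive, while at every finite $\ell \in S$ Hensel's lemma reduces squareness of the unit $pp_0$ to squareness modulo $\ell$ (or modulo $8$ when $\ell = 2$), so the local-square condition is exactly that $pp_0$ is a square mod $8C$. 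The same observation, combined with the fact that $\mathrm{Frob}_p$ acts on $K = \QQ(\sqrt{d} : d \mid 2C)$ through $\sqrt{d} \mapsto (d/p)\sqrt{d}$, yields $[p, K/\QQ] = [p_0, K/\QQ]$.

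First I would verify that $W_{\text{SD}}(E, p_0) = W_{\text{SD}}(E, p)$, so that each tuple $(a_1, a_2)$ in this space simultaneously represents a $2$-Selmer element of $E^{(p_0)}$ and of $E^{(p)}$ modulo $E[2]$. At places $v \in S$ the twists $E^{(p_0)}$ and $E^{(p)}$ are $\QQ_v$-isomorphic by the local-square hypothesis, so their local Selmer conditions coincide. At primes $\ell \notin S \cup \{p, p_0\}$ both twists have good reduction, making the Selmer condition simply unramifiedness of the cocycle, which is independent of the twist. At $\ell = p_0$ (respectively $\ell = p$) the other twist $E^{(p)}$ (respectively $E^{(p_0)}$) has good reduction and the chosen representative cocycle, whose $a_i$ are coprime to both primes, is unramified there, so it is automatically Selmer at that place. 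This local transfer at the two distinguished primes is the main technical obstacle.

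For each pair $(F, F')$ of distinct elements of $W_{\text{SD}}$, I then apply Theorem \ref{thm:4sel_gov}. When $F \cup F'$ is alternating the theorem gives $\langle F, F'\rangle_{E^{(p)}} = \langle F, F'\rangle_{E^{(p_0)}}$. Otherwise it produces a quadratic extension of $K_{F,F'} = \QQ(\sqrt{a_1}, \sqrt{a_2}, \sqrt{a_1'}, \sqrt{a_2'})$ that is unramified outside $S$ and supports a cochain $\Gamma$ with $d\Gamma = F \cup F'$; the construction in \eqref{eq:deg32} exhibits $L_{F,F'}$ as just such an extension, as noted in the paragraph preceding the proposition. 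The theorem therefore yields
\[\langle F, F'\rangle_{E^{(p)}} \,=\, \langle F, F'\rangle_{E^{(p_0)}} + \left[\frac{L_{F,F'}/K_{F,F'}}{\mathbf{d}}\right],\]
where $\mathbf{d}$ is any ideal of $K_{F,F'}$ of norm $(p_0/p)\mathbf{b}^2$ for some ideal $\mathbf{b}$ of $\QQ$. Identifying $\Gal(L_{F,F'}/K_{F,F'})$ with the corresponding subgroup of $\Gal(L_{F,F'}/\QQ)$ makes this Artin symbol computable from $\mathrm{Frob}_p$ and $\mathrm{Frob}_{p_0}$ in $L_{F,F'}/\QQ$, so with $p_0$ fixed it depends only on $[p, L_{F,F'}/\QQ]$.

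Since $L$ is the compositum of the fields $L_{F,F'}$ over all pairs of distinct representatives drawn from $W_{\text{SD}}(E, p_0)$, the single symbol $[p, L/\QQ]$ determines $[p, L_{F,F'}/\QQ]$ for every pair, hence every entry of the matrix of the Cassels-Tate pairing on $W_{\text{SD}}(E, p)$. The corank of that pairing governs the rank of $\text{Sel}^{(4)}(E^{(p)})$ via the standard relation between $\text{Sel}^{(4)}$ and the kernel of the Cassels-Tate pairing on $\SelT$, so the proposition follows.
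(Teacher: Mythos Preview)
Your overall strategy matches the paper's: the proposition is stated there as an immediate consequence of Theorem~\ref{thm:4sel_gov} together with the explicit construction of $L_{F,F'}$ in~\eqref{eq:deg32} and the equality $W_{\text{SD}}(E,p_0)=W_{\text{SD}}(E,p)$, and you are correctly filling in those steps.

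There is, however, a gap in your verification of $W_{\text{SD}}(E,p_0)=W_{\text{SD}}(E,p)$. At the two distinguished primes you check the Selmer condition for the \emph{good-reduction} twist at each prime: for $E^{(p)}$ at $p_0$ and for $E^{(p_0)}$ at $p$. But to pass a class from $\SelT(E^{(p_0)})/E[2]$ to $\SelT(E^{(p)})/E[2]$ you must also verify the local condition for $E^{(p)}$ at $p$, where $E^{(p)}$ has bad reduction. There ``unramified'' is not enough: as in the proof of Theorem~\ref{thm:4sel_gov}, for $\ell\notin S$ at which the twist ramifies, the Kummer image of $E^{(p)}(\QQ_\ell)$ is exactly the image of $E[2]$, which meets the unramified cocycles only in $0$. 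So you need your representative $F$ to be locally \emph{trivial} at $p$, not merely unramified. This is true, but it requires an extra observation: the chosen unramified-at-$p_0$ representative is in fact trivial at $p_0$ (it lies in the image of $E[2]$ by the Selmer condition for $E^{(p_0)}$, and in the unramified subspace by construction, and these meet only in $0$); since the $a_i$ divide $2C$ and $[p,K/\QQ]=[p_0,K/\QQ]$, the same Legendre symbols occur at $p$, so $F$ is trivial there as well. With that addition your argument goes through.
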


Supposing $W_{\text{SD}}(E, p_0)$ has rank $m$, there are $2^{\frac{1}{2}(m^2 - m)}$ possible alternating bilinear forms on this space. If this equals the degree of $L/K$, we find that there is a bijective correspondence between $\text{Gal}(L/K)$ and the set of alternating matrices, and we can use Chebotarev's density theorem in analogy to Proposition \ref{prop:ineff_1}. This merits the definition of generic twists.
\begin{defn}
\label{defn:ell_gen}
Take $E/\QQ$ to be an elliptic curve with full $2$-torsion, and take $p_0$ to be an odd prime not dividing the conductor of $E$. Then we call $(E, p_0)$ a \emph{generic twist} if the projection maps
\[\pi_i: W_{\text{SD}}(E, p_0) \rightarrow \QmodSq\]
are injective and have disjoint images for $i = 1, 2$, where $\pi_i$ maps to the $i^{th}$ factor of $\QmodSq \,\times\, \QmodSq$.
\end{defn}
For elliptic curves $E$ that obey certain technical hypotheses, we will find in Lemma \ref{lem:generic} that almost all $(d, p_0)$ will make $(E^{(d)}, p_0)$ a generic twist, so this definition is justified. In analogy with Proposition \ref{prop:ineff_1}, we have the following.

\begin{prop}
\label{prop:ineff_2}
Take $E$, $p_0$, $L$, and $K$ as in the previous proposition. Write $m + 2$ for the $2$-Selmer rank of $E^{(p_0)}$, and for $0 \le j \le m$, define
\[P^{\text{Alt}}(j \, | \, m) = \frac{ \left| \{M \in M^{\text{alt}}_m(\mathbb{F}_2)  : \,\text{corank}(M) = j\}\right|}{\left| M^{\text{alt}}_m(\mathbb{F}_2)\right|}\]
where $M_m^{\text{alt}}(\mathbb{F}_2)$ is the set of alternating $m\times m$ matrices with coefficients in $\mathbb{F}_2$. Write $Y_{N, \,C,\, p_0}$ for the set of primes $p$ less than $N$ so that $pp_0$ is a square mod $8C$. Then, if $(E, p_0)$ is a generic twist, $L/K$ has degree $2^{\frac{1}{2}(m^2 - m)}$ and
\[ \lim_{N \rightarrow \infty} \frac{\left| \{ p \in Y_{N, \,C, \,p_0}:\,\,  2(\text{Sel}^{(4)}(E^{(p)})) = (\Z/2\Z)^j \}\right|}{\left| Y_{N,\,C,\,p_0}\right|}= P^{\text{Alt}}(j \, | \, m)  .\]
\end{prop}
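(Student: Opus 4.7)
The plan is to mirror the proof of Proposition \ref{prop:ineff_1}, with $W_{\text{SD}}(E, p_0)$ carrying its alternating Cassels-Tate pairing playing the role of the pair $V_{\text{Tor}}(d, p_0), V_{\text{Quo}}(d, p_0)$. By Proposition \ref{4s_gov}, as $p$ ranges over $Y_{N, C, p_0}$ the Cassels-Tate pairing on $W_{\text{SD}}(E, p)$ changes by an element determined entirely by the Artin symbol $[p, L/\QQ]$, and since the Cassels-Tate pairing for elliptic curves is alternating \cite{Poon99}, this change takes values in the space $M_m^{\text{alt}}(\mathbb{F}_2)$ of alternating $m \times m$ matrices over $\mathbb{F}_2$. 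Once I establish $[L:K] = 2^{\frac{1}{2}(m^2 - m)}$ together with the bijectivity of the induced map $\text{Gal}(L/K) \to M_m^{\text{alt}}(\mathbb{F}_2)$, Chebotarev's density theorem will deliver the claimed distribution by summing the equal-density contributions over alternating matrices of corank $j$.

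The crux is the dimension count. Choose a basis $F_1, \dots, F_m$ of $W_{\text{SD}}(E, p_0)$ with corresponding pairs $(a_{1, i}, a_{2, i}) \in \QmodSq \times \QmodSq$. The generic twist hypothesis (Definition \ref{defn:ell_gen}) is precisely the statement that $\{a_{1, i}\}_{i=1}^m \cup \{a_{2, i}\}_{i=1}^m$ is a set of $2m$ linearly independent elements of $\QmodSq$, so I can pick $\sigma_i \in G_{\QQ}$ negating only $\sqrt{a_{1, i}}$ and $\tau_j \in G_{\QQ}$ negating only $\sqrt{a_{2, j}}$ among these $2m$ square roots. Following the $D_8$ analysis behind Proposition \ref{prop:dih_rec} applied to each of the three norm equations $x_k^2 - a_k y_k^2 = b a'_k z_k^2$ that feed into the construction \eqref{eq:deg32} of $L_{F_i, F_j}$, the commutator $[\sigma_i, \tau_j]$ fixes $L_{F_k, F_l}$ for $\{k, l\} \ne \{i, j\}$ and acts as the nontrivial element of $\text{Gal}(L_{F_i, F_j}/K_{F_i, F_j})$. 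The alternating property forces $\langle F_i, F_i\rangle = 0$ and $\langle F_i, F_j\rangle = \langle F_j, F_i\rangle$, collapsing the $m^2$ commutators to $\binom{m}{2}$ independent ones. Since $K/\QQ$ is abelian these all lie in $\text{Gal}(L/K)$, giving $[L:K] \ge 2^{\frac{1}{2}(m^2 - m)}$; equality holds because $L$ is by construction the compositum over $K$ of the $\binom{m}{2}$ quadratic extensions $L_{F_i, F_j}$.

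With bijectivity in hand, the rest is Chebotarev. The condition that $pp_0$ be a square mod $8C$ is exactly $[p, K/\QQ] = [p_0, K/\QQ]$, so $Y_{N, C, p_0}$ selects a single coset of $\text{Gal}(L/K)$ inside $\text{Gal}(L/\QQ)$, and every element of that coset has equal asymptotic density. Thus each alternating $m \times m$ matrix is realized as the pairing variation for a set of primes of relative density $1/|M_m^{\text{alt}}(\mathbb{F}_2)|$ inside $Y_{N, C, p_0}$. Since the $\mathbb{F}_2$-rank of $2\big(\text{Sel}^{(4)}(E^{(p)})\big)$ as used in the proposition equals the corank of the Cassels-Tate pairing on $W_{\text{SD}}(E, p)$, summing over alternating matrices of corank $j$ produces $P^{\text{Alt}}(j \mid m)$.

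The hardest step is verifying independence of the $\binom{m}{2}$ commutators $[\sigma_i, \tau_j]$ with $i < j$ inside $\text{Gal}(L/K)$. Each $L_{F_i, F_j}/\QQ$ from \eqref{eq:deg32} is a degree-32 extension containing several quadratic subextensions of $K_{F_i, F_j}$, only one of which encodes the Cassels-Tate variation, so I must check that the generic twist condition really rules out coincidences among the "correct" subextensions across different pairs $(i, j)$. Unwinding the three-factor product $\sqrt{(x_1 + y_1\sqrt{a_1})(x_2 + y_2\sqrt{a_2})(x_3 + y_3\sqrt{a_3})}$ and tracking how $[\sigma_i, \tau_j]$ acts on each factor — recalling that $a_3 = a_1 a_2$ modulo squares, so the commutator's action on the third factor is forced by its action on the first two — is where the analysis becomes delicate, though the same kind of three-cocycle computation that produced \eqref{eq:dih_rec} in the $D_8$ setting should see it through.
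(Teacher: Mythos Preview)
Your argument is correct and matches the paper's proof almost exactly: same choice of basis, same automorphisms $\sigma_i = \sigma_{1i}$ and $\tau_j = \sigma_{2j}$ coming from genericity, same commutator count giving $[L:K] = 2^{\binom{m}{2}}$, same appeal to Chebotarev. The only difference is that, for the step you flag as delicate, the paper bypasses the explicit three-factor radical in \eqref{eq:deg32} by working with a $1$-cochain $\Gamma_{ij}$ having $d\Gamma_{ij} = F_i \cup F_j$ and reading off $\Gamma_{ij}([\sigma_{1i},\sigma_{2j}])$ directly from the Weil-pairing values $(F_k(\sigma),F_l(\tau))_{\text{Weil}}$; this makes the independence of the $\binom{m}{2}$ commutators immediate without unwinding the $D_8$ structure.
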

\begin{proof}
Take $G = \text{Gal}(L/\QQ)$. Let 
\[F_1 = (a_{11}, a_{21}) ,\, \dots\,, F_m = (a_{1m}, a_{2m})\]
be a basis for $W_{\text{SD}}(E, p_0)$. Due to the genericity hypothesis, there is a $\sigma_{ij} \in \text{Gal}(L/\QQ)$ so
\[\sigma_{ij}(\sqrt{a_{ij}}) = -\sqrt{a_{ij}}\]
while
\[\sigma_{ij}(\sqrt{a_{kl}}) = \sqrt{a_{kl}}\]
for any $(k, l) \ne (i, j)$.

Now assume that $0 \le i, j \le m$ with $i \ne j$, and take $\Gamma_{ij}$ to be the cocycle on $L_{F_i, F_j}$ with coboundary $F_i \cup F_j$.
Then
\[F_i(\sigma_{1i}) \cup F_j(\sigma_{2j}) \ne F_j(\sigma_{1i}) \cup F_i(\sigma_{2j})\]
so $\Gamma_{ij}([\sigma_{1i}, \sigma_{2j}]) = -1$. But, if $\{l, k\} \ne \{i, j\}$, we find that
\[\Gamma_{lk}([\sigma_{1i}, \sigma_{2j}]) = +1.\]
In particular, the commutator $[G, G]$ has order at least $2^{\frac{1}{2}(m^2 - m)}$, being generated by $[\sigma_{1i}, \sigma_{2j}]$ for all pairs $1 \le i < j \le m$. But this is the maximal possible degree of the extension $L/K$, as $\text{Gal}(L/K)$ injects into the space of alternating pairings on $W_{\text{SD}}(E, p_0)$. This gives the proposition.
\end{proof}

\section{Statistical Implications}
\label{sec:stats}

With Proposition \ref{prop:ineff_1} and Proposition \ref{prop:ineff_2}, we have found how $8$-class ranks for quadratic fields $\QQ(\sqrt{dp})$ and $4$-Selmer ranks for twists $E^{(p)}$ are distributed for $p$ coming from a set $Y_{N,\, d, \,p_0}$ of primes. However, we are more interested in the distribution of these groups in larger families. On one side, the Cohen-Lenstra heuristics give predictions for the distribution of $8$-class groups over all imaginary quadratic fields. Though Delaunay's heuristics were originally framed for all elliptic curves over $\QQ$ ordered by conductor \cite{Del01}, the more natural analogue to the set of all imaginary quadratic fields is the set of all quadratic twists of a given elliptic curve. So these are the families we are interested in understanding.

The easiest way to do this is to split these large families into families covered by Proposition \ref{prop:ineff_1} and Proposition \ref{prop:ineff_2}. For this to work, we would need to make the error terms of these propositions explicit. Unfortunately, the effective form of Chebotarev's density theorem is far too weak for this application without any extra conditions. Because of this, we need some form of the grand Riemann hypothesis.
\begin{ass}
\label{ass:metab}
Let $K/\QQ$ be any finite Galois extension with $\text{Gal}(K/\QQ)$ a metabelian $2$-group. Then the grand Riemann hypothesis holds for the Dedekind zeta function $\zeta_K(s)$ associated to this extension. In other words, every zero of $\zeta_K(s)$ with $0 < \text{Re}(s) < 1$ lies on the line $\text{Re}(s) = \frac{1}{2}$.
\end{ass}
Since the Artin conjecture is known for monomial extensions, this assumption implies the grand Riemann hypothesis for Artin  $L$-functions associated with representations of metabelian $2$-groups.

\begin{thm}
\label{thm:8class_stats}
Suppose that Assumption \ref{ass:metab} is true. Let $X^-_N$ be the set of negative squarefree integers $d$ with $|d| < N$. Recall the notation $K_d = \QQ(\sqrt{d})$ for $d \in X^-_N$. Supposing $m \ge j \ge 0$, define $P^{\text{Class}}(j\, | \,m)$ to be
\[ \lim_{N \rightarrow \infty} \frac{\left|\{d \in X^-_N : \,\,2\big(\text{Cl}\, K_d[8]\big) \,\cong \,(\Z/2\Z)^{m-j} \oplus (\Z/4\Z)^j \} \right|}{\left|\{d \in X^-_N : \,\, 2\big(\text{Cl} \,K_d[4]\big) \cong (\Z/2\Z)^m \}\right|}.\]

Then, for al $m \ge j \ge 0$, we have
\[P^{\text{Class}}(j\, | \,m) = P^{\text{Mat}}(j \, | \, m),\] 
with $P^{\text{Mat}}(j \, | \, m)$ as defined in Proposition \ref{prop:ineff_1}.
\end{thm}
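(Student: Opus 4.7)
The plan is to partition $X^-_N$ by singling out an auxiliary prime, apply Proposition~\ref{prop:ineff_1} to each resulting subfamily with the Chebotarev error made effective via Assumption~\ref{ass:metab}, and then sum. More precisely, I would write each squarefree $\Delta \in X^-_N$ as $\Delta = dp$ by singling out (say) the largest prime factor $p$ of $|\Delta|$, and treat $d$ as the parameter running over negative squarefree integers with all prime factors smaller than $p$. Fixing for each $d$ an auxiliary representative $p_0$ in every residue class mod $8d$ on which $pp_0$ is a square, Proposition~\ref{prop:8c_gov} says that the isomorphism class of $2(\text{Cl}\,K_{dp}[8])$ depends only on the Frobenius of $p$ in the governing field $L(d,p_0)/\QQ$, and for generic $(d,p_0)$ Proposition~\ref{prop:ineff_1} identifies the associated conditional distribution with $P^{\text{Mat}}(\,\cdot\,|\,m)$.

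To upgrade these pointwise statements to an asymptotic over $X^-_N$, I would invoke the Lagarias--Odlyzko effective Chebotarev density theorem. Since $L(d,p_0)/\QQ$ is Galois with group a metabelian $2$-group (an elementary abelian $2$-extension of the multiquadratic field $K$ of Proposition~\ref{prop:8c_gov}), Assumption~\ref{ass:metab} supplies GRH for the associated Dedekind zeta function, hence square-root cancellation in the count of primes $p \le x$ with prescribed Frobenius, with explicit dependence on $[L:\QQ]$ and $\text{disc}(L)$. I would then truncate the outer parameter so that $d$ has at most $r_0 = o(\log\log N)$ prime factors and the $4$-class rank $m$ is at most some slowly growing $m_0$; the discarded discriminants are negligible by Hardy--Ramanujan on the number of prime factors together with the Fouvry--Kl\"uners bound on the $4$-class rank.

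Within the truncated range, Lemma~\ref{lem:generic} (promised later in the paper) shows that almost all pairs $(d,p_0)$ are generic, so the aggregation of Proposition~\ref{prop:ineff_1} over every generic pair yields, for each fixed $m$, a count of $\Delta \in X^-_N$ with $4$-class rank $m$ and $8$-class structure $(\Z/2\Z)^{m-j}\oplus(\Z/4\Z)^j$ that matches $P^{\text{Mat}}(j\,|\,m)$ times the Fouvry--Kl\"uners count of discriminants of $4$-class rank $m$, up to an error of size $o(1)$ times the denominator in the definition of $P^{\text{Class}}(j\,|\,m)$. Since $P^{\text{Mat}}(j\,|\,m)$ depends only on $m$, the averaged distribution still equals $P^{\text{Mat}}(j\,|\,m)$, giving the theorem.

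The main obstacle is the tension between the size of $L(d,p_0)$ and the effective Chebotarev error. The degree of $L$ can grow with $m$ roughly as $2^{m^2}$, and $\text{disc}(L)$ accumulates ramification from every prime dividing $d$, so even under GRH the error in counting primes $p \le N/|d|$ with prescribed Frobenius is only useful when $N/|d|$ comfortably dominates a power of $[L:\QQ]\cdot\log\text{disc}(L)$. Choosing the truncation parameters $r_0$ and $m_0$ so that this remains true for the vast majority of $d$, while still retaining a full-density portion of $X^-_N$, is the book-keeping step that drives the hypothesis; it is precisely because the bound must be uniform across an entire family of metabelian $2$-extensions that Assumption~\ref{ass:metab} is phrased as a hypothesis on the whole family rather than on a single field.
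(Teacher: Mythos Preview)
Your high-level strategy is the same as the paper's: split off one prime, use genericity (Lemma~\ref{lem:generic}) so that Proposition~\ref{prop:ineff_1} applies, and make Chebotarev effective via Assumption~\ref{ass:metab}. The difficulty you correctly isolate in your last paragraph is also the crux of the paper's argument. However, your proposed resolution of that difficulty contains a genuine error, and the paper resolves it by a different device.

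The error is your truncation ``$d$ has at most $r_0 = o(\log\log N)$ prime factors.'' Hardy--Ramanujan says the \emph{opposite} of what you need: a typical integer below $N$ has $(1+o(1))\log\log N$ prime divisors, so restricting $\omega(d)$ to $o(\log\log N)$ throws away density $1$, not density $0$. You cannot shrink $r$ to control $[L:\QQ]\sim 2^{r+1+m^2}$; you must live with $r\sim\log\log N$ and instead make the range of the varying prime large enough that the GRH error term $O\!\big(2^r\sqrt{N'}(\log N)^2\big)$ is beaten by the main term $2^{-r-1}N'$.

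The paper's device for this is to work with \emph{ordered} tuples $(p_1,\dots,p_r)$ rather than singling out the largest prime. A squarefree integer with $r$ prime factors corresponds to $r!$ tuples, and the varying prime is simply $p_1$, the first coordinate. Lemma~\ref{lem:p1_tiny} then shows that the set of tuples with $p_1 < \exp\exp\sqrt{\ln\ln N}$ (equivalently $d_{2:r} > N\exp(-\exp\sqrt{\ln\ln N})$) has limit density zero; the point is that by symmetry $p_1$ is a uniformly random member of $\{p_1,\dots,p_r\}$, and a Sathe--Selberg estimate shows that a random one of these is this large with probability $1-O((\ln\ln N)^{-1/2})$. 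With $N'=N/d_{2:r}\ge\exp\exp\sqrt{\ln\ln N}$ the GRH-Chebotarev error is negligible uniformly in $r\sim\log\log N$ and in the class $\overline{x}$, which is exactly what is needed to average \eqref{eq:equiv_c_pnt} over all equivalence classes. Your ``largest prime factor'' parametrization could in principle be made to work via smooth-number estimates (the largest prime factor of a random $n\le N$ is typically a positive power of $N$), but that is a different argument from the one you sketched, and the truncation you wrote down must be discarded.
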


\begin{thm}
\label{thm:4Sel_stats}
Suppose that Assumption \ref{ass:metab} is true. Let $E/\QQ$ be an elliptic curve with full $2$-torsion. Assume that $E$ has no cyclic subgroup of order four defined over $\QQ$.  Let $X_N$ be the set of nonzero squarefree integers $d$ with $|d| < N$. Supposing $m \ge j \ge 0$, define $P^{\text{Selm}}_E(j\, | \,m)$ to be
\[ \lim_{N \rightarrow \infty} \frac{\big|\{d \in X_N  : \,\,\text{Sel}^{(4)}\big(E^{(d)}\big) \,\cong \,(\Z/2\Z)^{m-j + 2} \oplus (\Z/4\Z)^j \} \big|}{\big|\{d \in X_N  : \,\, \text{Sel}^{(2)}\big(E^{(d)}\big) \cong (\Z/2\Z)^{m+2} \}\big|}.\]

Then, for all $m \ge j \ge 0$, we have 
\[P^{\text{Selm}}_E(j\, | \,m) = P^{\text{Alt}}(j \, | \, m),\]
with $P^{\text{Alt}}(j \, | \, m)$ as defined in Proposition \ref{prop:ineff_2}.
\end{thm}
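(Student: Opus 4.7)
The plan is to bundle together the single-prime distributions of Proposition \ref{prop:ineff_2} into the full twist family $X_N$ by a large-prime parametrization, with errors controlled through an effective form of the Chebotarev density theorem made available by Assumption \ref{ass:metab}. For each squarefree $d \in X_N$ I would write $d = d'p$, with $p$ the largest prime divisor and $d'$ a small squarefree cofactor, discarding by standard sieve estimates the negligible collection of $d$ lacking a dominant prime factor. For each fixed $d'$ and each residue class $[p_0]$ modulo $8C$, where $C$ is the conductor of $E^{(d')}$, Proposition \ref{prop:ineff_2} applied to the curve $E^{(d')}$ asserts that, as $p$ ranges over the primes in that class, the fraction of twists $E^{(d'p)}$ with $2\cdot\text{Sel}^{(4)}(E^{(d'p)}) \cong (\Z/2\Z)^j$ (conditional on the prescribed $2$-Selmer rank $m+2$) converges to $P^{\text{Alt}}(j \mid m)$, provided $(E^{(d')}, p_0)$ is a generic twist in the sense of Definition \ref{defn:ell_gen}.

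Next I would make this convergence quantitative. The governing field $L = L_{d', p_0}$ produced by Proposition \ref{4s_gov} is a metabelian $2$-extension of $\QQ$, so by the remark following Assumption \ref{ass:metab}, GRH holds for every Artin $L$-function attached to a representation of $\text{Gal}(L/\QQ)$. The standard derivation then yields an effective Chebotarev estimate whose error is polynomial in $\log[L:\QQ]$ and in the log-discriminant of $L$. Restricting to $|d'| \le N^{\theta}$ for a suitably small $\theta > 0$ renders this error negligible against the main term of order $N/(|d'|\log(N/|d'|))$ coming from the density $1/|\text{Gal}(L/\QQ)|$ predicted by Proposition \ref{prop:ineff_2}.

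To assemble the pieces, I would invoke Lemma \ref{lem:generic} to confirm that all but a density-zero subset of pairs $(E^{(d')}, p_0)$ is generic, use the assumption that $E$ has no rational cyclic subgroup of order $4$ to guarantee that the arithmetic of $E^{(d'p)}$ matches the setup of $W_{\text{SD}}(E^{(d')}, p)$ employed in the definition of generic twist, and then sum dyadically over admissible $d'$. Normalizing by the count of $d \in X_N$ with $2$-Selmer rank $m+2$ should then collapse the total to $P^{\text{Alt}}(j \mid m)$, as all the local pieces have this common limiting proportion.

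The main obstacle is calibrating the Chebotarev error against the growth of the governing field: the degree $[L:\QQ]$ behaves like $2^{\omega(d')^2}$, which for typical $d'$ with $\omega(d') \sim \log\log N$ is super-polynomial in $|d'|$ though still $N^{o(1)}$. Verifying that the GRH-conditional error $\sqrt{N/|d'|}\cdot(\log N)^{O(1)}\cdot[L:\QQ]$ remains $o(N/(|d'|\log N))$ uniformly after summing over $d'$ and residue classes, while simultaneously handling the borderline cases (the density-zero exceptional set of non-generic twists, values of $d$ with no large prime factor, and the transition region at $|d'| \approx N^{\theta}$), is the technically delicate part of the argument.
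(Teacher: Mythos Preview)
Your overall strategy is the paper's: fix all prime factors of $d$ but one, apply the governing-field machinery of Proposition~\ref{prop:ineff_2} together with GRH-conditional effective Chebotarev inside each piece, discard the non-generic pieces via Lemma~\ref{lem:generic}, and reassemble. But the parametrization you chose has a genuine gap.

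Taking $p$ to be the \emph{largest} prime factor and restricting to $|d'|\le N^{\theta}$ does not cover density $1$ of $X_N$. For $|d|$ near $N$ the condition $|d'|\le N^{\theta}$ forces the largest prime factor of $d$ to exceed $N^{1-\theta}$, and by Mertens the density of such $d$ is only $-\ln(1-\theta)\approx\theta$, not $1$. So the ``negligible collection of $d$ lacking a dominant prime factor'' that you plan to discard is actually a set of positive density (indeed, density close to $1$ when $\theta$ is small). The paper avoids this by working with \emph{ordered} tuples $(p_1,\dots,p_r)$ of distinct primes and letting $p_1$---an arbitrary prime factor, not the largest---be the one that varies. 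Each squarefree $d$ with $r$ prime factors corresponds to $r!$ such tuples, so order-independent properties have the same density over tuples as over $X_N$. The constraint needed for the Chebotarev error is then only $p_2\cdots p_r\le N\,e^{-e^{\sqrt{\ln\ln N}}}$, i.e.\ that the range available to $p_1$ has length at least $e^{e^{\sqrt{\ln\ln N}}}$; Lemma~\ref{lem:p1_tiny} shows this holds with limit density $1$, essentially because a typical $d$ has only about $\sqrt{\ln\ln N}$ of its $\approx\ln\ln N$ prime factors below $e^{e^{\sqrt{\ln\ln N}}}$, so a uniformly chosen one almost always exceeds that threshold.

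A secondary point: your estimate $[L:\QQ]\sim 2^{\omega(d')^{2}}$ is too large. Since $m$ is fixed in the theorem, the generic governing field has $[L:K]=2^{(m^{2}-m)/2}=O_{m}(1)$ while $[K:\QQ]=O_{E}(2^{\omega(d')})$, so $[L:\QQ]=O_{E,m}(2^{\omega(d')})$. This overestimate is harmless for the Chebotarev bound itself, but it is presumably what drove you to the aggressive $N^{\theta}$ cutoff; with the correct degree the far weaker threshold above already yields a pointwise error of size $O\big(e^{-\frac{1}{2}\sqrt{\ln\ln N}}\big)$ in each equivalence class, exactly as in the analogue of \eqref{eq:equiv_c_pnt}.
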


Before turning to the proof of these theorems, we note that they are consistent with the Cohen-Lenstra and BKLPR heuristics. From \cite{FrMa89} and Gerth's work, we expect that the distribution of the groups $2(\text{Cl}\, K_d[2^{\infty}])$ is approached by the distribution of cokernels of $r \times r$ matrices $M$ with coefficients in $\Z_2$ as $r$ heads to infinity. Taking $M$ to be such a matrix, we can write its cokernel as the set of row vectors $v^{\top} \in (\QQ_2/\Z_2)^r$ so that
\[v^{\top}M = 0.\]
The cokernel of its transpose consists of the vectors  $v \in (\QQ_2/\Z_2)^r$ so
\[Mv = 0.\]
Suppose $M$ had cokernel with $2$-rank $m$. Taking $e_1, \dots, e_r$ for a basis of $\QQ_2^r$, we can choose matrices $A, B \in \text{GL}_r(\Z_p)$ so $AMB$ has transpose cokernel and cokernel spanned by the set $\{\frac{1}{2}e_1, \dots, \frac{1}{2}e_m\}$ and their transposes. Define a bilinear pairing with values in $\frac{1}{2}\Z/\Z$ by
\[\langle \frac{1}{2}e_i, \frac{1}{2}e_j\rangle =  \frac{1}{4}e_i^{\top} AMB e_j\]
Different forms of the matrix in $M_m(\mathbb{F}_2)$ defining this pairing correspond to subsets of $M_r(\Z_2)$ with equal Haar measure, and the rank of the kernel of this pairing is the $4$-rank of the cokernel of $M$. So a cokernel with $2$-rank $m$ has probability $P^{\text{Mat}}(j \, | \, m)$ of having $4$-rank $j$, and this is consistent with Theorem \ref{thm:8class_stats}.

The proof that Theorem \ref{thm:4Sel_stats} is consistent with the BKLPR heuristic is largely the same. We just need to work over the family of alternating matrices instead.

\subsection{Proof of the main theorems}
Our initial setup comes largely from \cite{Kane13}. For $D$ a positive even integer, take $S_{N, r, D}$ to be the set of tuples of distinct primes $(p_1, \dots, p_r)$ with $\prod_i p_i$ coprime to $D$ and less than $N$. 

Given a property $P$ of tuples of primes and a positive integer $N$, we define the \emph{limit density} of $P$ in the $S_{N, r, D}$ as
\[\lim_{N\rightarrow \infty} \frac{\sum_r \left| \{ x\in S_{N, r, D}: \,\, P(x, N)\} \right|}{\sum_r | S_{N, r, D}|}.\]
where, here and throughout this section, the sums over $r$ are taken over the range
\[\ln\ln N - (\ln\ln N)^{3/4} <\,\, r \,\, < \ln\ln N + (\ln\ln N)^{3/4}.\]
From \cite[Theorem A]{HaRa17}, we know that, as $N$ heads to infinity, almost all squarefree numbers $n$ less than $N$ have their number of prime factors in this range. Thus, if $P$ does not depend on the order of the $p_i$ or on $N$, the limit density of $P$ is equal to the natural density of positive squarefree numbers coprime to $D$ that satisfy $P$.

The following result is clear from the work of Fouvry-Kl{\"u}ners \cite{Fouv07} and Kane \cite{Kane13}.

\begin{lem}
\label{lem:two_four}
For any $m \ge 0$, the natural density of negative quadratic discriminants $\Delta$ so that $K_\Delta$ has $4$-class rank exactly $m$ exists and is positive

Similarly, take $E/\QQ$ to be an elliptic curve with full rational two torsion. Assume that $E$ has no cyclic subgroup of order four defined over $\QQ$. Then, for any $m \ge 0$, the natural density of squarefree integers $n$ so that $E^{(n)}$ has $2$-Selmer rank exactly $m+2$ exists and is positive.
\end{lem}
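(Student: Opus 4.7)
The plan is to derive both statements directly from the cited results of Fouvry--Kl\"uners and Kane, with a brief translation of notation in each case.

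For the first statement, I would appeal to the main theorem of Fouvry--Kl\"uners \cite{Fouv07}, which verifies Gerth's conjecture for imaginary quadratic fields: for each $m \ge 0$, the density
\[\lim_{N\to\infty} \frac{\left|\{\Delta < 0 \text{ fundamental discriminant} : |\Delta| < N,\ \dim_{\mathbb{F}_2} 2(\mathrm{Cl}\, K_\Delta[4]) = m\}\right|}{\left|\{\Delta < 0 \text{ fundamental discriminant} : |\Delta| < N\}\right|}\]
exists and equals the explicit positive value predicted by Gerth (an infinite product of factors of the form $1 - 2^{-i}$ together with a factor $2^{-m^2}$ and $\prod_{i=1}^m(1-2^{-i})^{-2}$). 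Positivity for every $m$ is manifest from the formula, and allowing non-fundamental squarefree $d$ only changes things by a positive density factor. This gives the first claim.

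For the second statement, I would invoke Kane's theorem \cite{Kane13} (complementing Swinnerton-Dyer \cite{Swin08}): under the running hypothesis that $E/\QQ$ has full rational $2$-torsion and no cyclic subgroup of order $4$ defined over $\QQ$, the distribution of $\mathrm{Sel}^{(2)}(E^{(d)})/E^{(d)}[2]$ over squarefree $d$ ordered by absolute value matches the BKLPR prediction; in particular, each nonnegative value of the rank of this quotient is achieved on a set of positive density. The cyclic-subgroup hypothesis guarantees that no twist $E^{(d)}$ acquires a $\QQ$-rational point of order $4$, so the map
\[E^{(d)}[2] \longrightarrow \mathrm{Sel}^{(2)}(E^{(d)})\]
is injective with image of $\mathbb{F}_2$-rank exactly $2$ for every squarefree $d$. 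Therefore the $2$-Selmer rank of $E^{(d)}$ equals $m+2$ precisely when the BKLPR-distributed quotient has rank $m$, and the stated density is positive.

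The only subtlety, and the closest thing to an obstacle, is checking that the hypotheses of the lemma translate correctly to the hypotheses of the cited theorems, and in particular that the rank shift by $2$ in the Selmer statement is exact rather than generic. This is handled entirely by the no-cyclic-$4$-subgroup assumption: since $E^{(d)}$ inherits the same $2$-torsion as $E$ and any $\QQ$-rational $4$-cycle on $E^{(d)}$ would descend to one on $E$ after the twist, the image of $E^{(d)}[2]$ in $\mathrm{Sel}^{(2)}(E^{(d)})$ has rank exactly $2$ uniformly in $d$. With this check in place, both parts of the lemma follow immediately from the cited distributional results.
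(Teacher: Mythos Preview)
Your proposal is correct and takes essentially the same approach as the paper, which simply states that the lemma ``is clear from the work of Fouvry--Kl\"uners \cite{Fouv07} and Kane \cite{Kane13}'' without further argument. You supply more detail than the paper does, in particular the verification that the no-cyclic-$4$-subgroup hypothesis is twist-invariant and forces the image of $E^{(d)}[2]$ in $\mathrm{Sel}^{(2)}(E^{(d)})$ to have rank exactly $2$; this is a useful elaboration but not a different approach.
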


Because of this lemma, we can exclude discriminants and twists identified by a property with limit density zero. This is where the following two lemmas come into play. For $x = (p_1, \dots, p_r)$, we write
\[d_{2:r} = \prod_{2 \le i \le r} p_i\]

\begin{lem}
\label{lem:p1_tiny}
For $(p_1, \dots, p_r) \in S_{N, r, D}$, let $P$ be the property that 
\[d_{2:r} > Ne^{-e^{\sqrt{\ln \ln N}}}.\]
Then the limit density of $P$ over the $S_{N, r, D}$ is zero.
\end{lem}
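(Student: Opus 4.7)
\medskip

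\textbf{Proof plan.} Observe first that the condition $P$ is equivalent to requiring the leading prime $p_1$ to be small: since $p_1 \cdot d_{2:r} < N$, the inequality $d_{2:r} > N e^{-e^{\sqrt{\ln \ln N}}}$ forces $p_1 < B$, where $B := e^{e^{\sqrt{\ln \ln N}}}$. Conversely, given any tuple with $p_1 < B$, the remaining product $d_{2:r}$ could be large, but the bulk of such tuples indeed have $d_{2:r}$ close to $N/p_1$. So up to a negligible correction, I am trying to bound the limit density of tuples with $p_1 < B$.

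The plan is to compute this density by stratifying over the value of $p_1$. Writing $T(M, s, D')$ for the number of squarefree integers $\le M$ coprime to $D'$ with exactly $s$ prime factors, we have $|S_{N,r,D}| = r! \cdot T(N, r, D)$, and the number of tuples with first coordinate equal to a fixed prime $q$ (coprime to $D$ with $q < B$) is $(r-1)! \cdot T(N/q, r-1, qD)$. By Landau's theorem in the form
\[
T(M, s, D') \sim \frac{M (\ln \ln M)^{s-1}}{(s-1)! \, \ln M}
\]
valid uniformly for $s$ in the range $\ln\ln M \pm (\ln\ln M)^{3/4}$, the conditional probability that $p_1 = q$ satisfies
\[
\frac{(r-1)! \, T(N/q, r-1, qD)}{r! \, T(N, r, D)} \ll \frac{1}{q \ln \ln N}
\]
uniformly for $q < B$ and $r$ in the prescribed range (the factor $(\ln N)/(\ln(N/q)) = 1 + O(\ln B/\ln N)$ is negligible since $\ln B \ll e^{\sqrt{\ln\ln N}} = o(\ln N)$).

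Summing over the primes $q < B$ coprime to $D$ and applying Mertens' theorem,
\[
\sum_{\substack{q < B \\ q\text{ prime}}} \frac{1}{q} = \ln \ln B + O(1) = \sqrt{\ln \ln N} + O(1),
\]
so the total density contribution is $O\!\left(1/\sqrt{\ln \ln N}\right) = o(1)$. Finally, one sums this bound against the distribution of $r$ in the window $\ln\ln N \pm (\ln\ln N)^{3/4}$; since the bound on the conditional probability is uniform in $r$ in this range, the same $o(1)$ bound governs the full limit density.

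The main technical point will be ensuring that Landau's asymptotic holds with adequate uniformity in both the modulus $qD$ (which can grow like $B$) and the parameter $s = r-1$ near $\ln\ln N$, so that the ratio $T(N/q, r-1, qD) / T(N, r, D)$ really has the expected shape $(r-1)/(r q \ln\ln N)$ throughout the relevant ranges; standard Selberg--Delange or Landau-type estimates (as used already in \cite{HaRa17}) provide enough uniformity for this.
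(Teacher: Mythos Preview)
Your proof is correct and follows essentially the same route as the paper: both observe that $P$ implies $p_1 < B = e^{e^{\sqrt{\ln\ln N}}}$, stratify over the value of $p_1$, apply a Sathe--Selberg/Landau estimate to count the remaining $(r-1)$-tuples, and then sum $1/q$ over primes $q<B$ via Mertens to obtain the $O((\ln\ln N)^{-1/2})$ bound. One small simplification: your worry about uniformity in the modulus $qD$ is unnecessary for the upper bound, since dropping the coprimality-to-$q$ condition only enlarges the count; the paper accordingly just uses the bound for $S_{N/q,\,r-1,\,D}$.
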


Recall that we defined the notion of a generic discriminant and generic twist in Definitions \ref{defn:class_gen} and \ref{defn:ell_gen}.

\begin{lem}
\label{lem:generic}
Take $d_0$ to be $-1$, $-4$, or $-8$, and take $D = 2$.  Then the limit density of $(p_1, \dots, p_r)$ in $S_{N, r, D}$ with $(d_0d_{2:r}, \,p_1)$ a non-generic discriminant is zero.
 
Similarly, take $E/\QQ$ to be an elliptic curve with full rational two torsion with conductor dividing $D$. Assume that $E$ has no cyclic subgroup of order four defined over $\QQ$. Then the limit density of $x \in S_{N, r, D}$ with $(E^{(d_{2:r})}, \,p_1)$ a non-generic twist is zero.
\end{lem}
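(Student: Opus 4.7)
The plan is a first-moment argument. For the class-group statement, non-genericity of $(d_0 d_{2:r},\, p_1)$ means that there exists a nontrivial square class $\bar a \in \QmodSq$ with $a \mid d_0 d_{2:r}$ satisfying
\[
(a, \Delta_0)_v \,=\, (a, -\Delta_0)_v \,=\, +1 \quad \text{at every place } v,
\]
where $\Delta_0 = d_0 d_{2:r} p_1$. Each candidate $\bar a$ is indexed by a subset $I \subseteq \{2, \dots, r\}$ together with a bounded choice of $d_0$-part, for a total of $O(2^r)$ candidates. For each, I bound the fraction of $r$-tuples in $S_{N,r,D}$ making it a witness, and sum.

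A local computation using the standard formulas for Hilbert symbols at odd primes shows that, at every prime $p_j$ (whether $j \in I$ or not), the joint condition $(a, \pm \Delta_0)_{p_j} = +1$ cuts out a set of residue classes of density $1/4$: roughly, it forces $p_j \equiv 1 \pmod{4}$ (so that $-1$ is a square mod $p_j$) together with a Legendre-type condition coupling $p_j$ to the other primes. The conditions at $v = 2, \infty$ contribute only a bounded constant factor. The two product formulas for $(a, \Delta_0)$ and $(a, -\Delta_0)$ give two linear dependencies among these local conditions, worth an extra factor of $4$. Combining, the fraction of $r$-tuples in $S_{N,r,D}$ for which a given $\bar a$ witnesses non-genericity is at most $C \cdot 4^{1-r}$ for an absolute constant $C$.

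Summing over the $O(2^r)$ candidates yields an expected number of witnesses per tuple of size $O(2^{-r})$. Because the averaging range forces $r \geq \log\log N - (\log\log N)^{3/4} \to \infty$ as $N \to \infty$, Markov's inequality gives limit density $0$, proving the class-group case.

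The twist case is formally parallel: failure of genericity for $(E^{(d_{2:r})}, p_1)$ produces a nontrivial element of $W_{\text{SD}}(E^{(d_{2:r})}, p_1)$ of one of the degenerate shapes $(1, \cdot)$, $(\cdot, 1)$, or $(\alpha, \alpha)$, and the Selmer conditions on such an element are again Hilbert-symbol vanishings to which the same Chebotarev accounting applies. The hypothesis that $E$ has no rational cyclic subgroup of order $4$ ensures that the image of $E[2]$ in the $2$-Selmer group does not contribute systematic degenerate elements, so the first-moment bound is not vacuous. The main technical obstacle in both cases is uniformity of the Chebotarev estimates as the moduli $\prod_i p_i$ grow with $N$; the conductors can be much larger than any fixed power of $\log N$, so Siegel--Walfisz does not apply directly. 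Instead, this is handled unconditionally by the character-sum machinery used by Heath-Brown \cite{Heat94} and Fouvry--Kl\"uners \cite{Fouv07} in the analysis of $4$-class ranks, and in particular no appeal to Assumption \ref{ass:metab} is required for this lemma.
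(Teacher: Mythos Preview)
Your overall strategy---a first-moment count over candidate witnesses, made rigorous by the Heath-Brown/Fouvry--Kl\"uners/Kane character-sum machinery---is exactly what the paper does. The paper phrases it as ``compute in the random Legendre-symbol model, then transfer to $S_{N,r,D}$ via Kane's Proposition~18,'' but this is the same engine you invoke. So the high-level plan is fine and, as you note, unconditional.

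There are, however, two concrete errors in your execution.

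\textbf{The local density is wrong.} You claim that at every $p_j$ the pair of conditions $(a,\pm\Delta_0)_{p_j}=+1$ has density $1/4$, forcing $p_j\equiv 1\pmod 4$. This is false when $p_j\nmid a$: then $a$ is a unit at $p_j$, so $(a,-1)_{p_j}=+1$ automatically, and the two conditions collapse to the single condition $\bigl(\tfrac{a}{p_j}\bigr)=+1$ of density $1/2$. The congruence $p_j\equiv 1\pmod 4$ is forced only at the $k$ primes dividing $a$. With the correct count ($2k$ conditions at primes dividing $a$, $r-1-k$ at the others) one gets $2^{-(r+k-1)}$ per witness and, after summing over $k$,
\[
\sum_{k}\binom{r-1}{k}2^{-(r+k-1)} \;=\; O\bigl((3/4)^r\bigr),
\]
not $O(2^{-r})$. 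This still tends to zero, so the conclusion survives, but the argument as written is incorrect.

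\textbf{The characterization of non-generic twists is incomplete.} You say non-genericity of $(E^{(d_{2:r})},p_1)$ produces a single Selmer element of shape $(1,\cdot)$, $(\cdot,1)$, or $(\alpha,\alpha)$. The first two cover non-injectivity of $\pi_1,\pi_2$, but the ``disjoint images'' failure does \emph{not} in general yield an element $(\alpha,\alpha)$: it yields a \emph{pair} $F=(\alpha,\beta)$, $F'=(\gamma,\alpha)$ with $\alpha\neq 1$, and there is no reason for any $\mathbb{F}_2$-combination of these to have equal coordinates. A first moment over single Selmer elements therefore misses this case entirely. The paper handles it by bounding the average of $|W_{ng}|\cdot|W'_{ng}|$, where $W_{ng}$ records exactly those $F$ admitting a partner $F'$ with $a'_1=a_2$; this is effectively a first moment over \emph{pairs} (and the paper cites Swinnerton-Dyer's Lemmas~4--7 for the Legendre-symbol-model computation). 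You need to do the same: count pairs $(F,F')\in W_{\text{SD}}^2$ with $\pi_1(F)=\pi_2(F')$, not single degenerate elements.
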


We will prove these lemmas after proving that they imply the main theorems.

\begin{proof}[Proof of Theorem \ref{thm:8class_stats}]
Take $D = 2$. Take $d_0 \in \{-1, -4, -8\}$.

Suppose $(p_1, \dots, p_r)$ lies in $S_{N, r, D}$. Write $\Delta = d_0 \cdot \prod_{i = 1}^r p_i$ and $d_{2:r} = \prod_{i=2}^r p_i$. We say that the tuple satisfies the property $P_m$ if $(d_0d_{2:r}, p_1)$ is a generic discriminant, if
\[2\big(\text{Cl} \,K_{\Delta}[4]\big) \cong (\Z/2\Z)^m,\]
and if
\[d_{2:r} \le Ne^{-e^{\sqrt{\ln\ln n}}}.\]
For $0 \le j \le m$, we say that the tuple satisfies $P_{mj}$ if it satisfies $P_m$ and if
 \[2\big(\text{Cl}\, K_{\Delta} [8]\big) \,\cong \,(\Z/2\Z)^{m-j} \oplus (\Z/4\Z)^j. \]

 We will prove that the limit probability that $P_{mj}$ is satisfied given that $P_m$ is satisfied is equal to $P^{\text{Mat}}(j \, | \, m)$. In light of the last three lemmas, this is enough to prove the theorem.
 
 Given tuples $x = (p_1, \dots, p_r)$ and $x' = (p'_1, \dots, p'_r)$ in $S_{N, r, D}$, write $x \sim x'$ if 
 \begin{itemize}
 \item $p_i = p'_i$ for all $i \ne 2$ and
 \item $p_1p'_1$ is a square mod $8d_{2:r}$.
 \end{itemize}
Write $\mathscr{X}$ for the set of equivalence classes. We notice that, if a tuple $x$ satisfies $P_m$, then any $x' \sim x$ satisfies $P_m$. We can thus take $\mathscr{X}_m$ to be the subset of $\mathscr{X}$ of classes satisfying $P_m$. We are looking for the ratio
\[\frac{\sum_{\overline{x} \in \mathscr{X}_m} \left| \{x \sim \overline{x} \,:\,\, P_{mj}(x)\}\right|}{\sum_{\overline{x} \in \mathscr{X}_m} \left| \{x \sim \overline{x} \}\right|}.\]

We will prove that, for any nonempty $\overline{x} \in \mathscr{X}_m$, we have
\begin{equation}
\label{eq:equiv_c_pnt}
\frac{\left| \{x \sim \overline{x} \,:\,\, P_{mj}(x)\}\right|}{\left| \{x \sim \overline{x} \}\right|} = P^{\text{Mat}}(j \, | \, m) + O\big(e^{-\frac{1}{2}\sqrt{\ln \ln N}}\big)
\end{equation}
with the implicit constant in the O notation independent of $N$, $r$, and the choice of $x$. Taking the weighted average of these over all classes of $\mathscr{X}_m$ will then give the theorem.

Take $(p_0, p_2, \dots, p_r)$ to be a tuple in $\overline{x}$, and take $d_{2:r}$ as before. Take $L/K/\QQ$ to be the governing field associated to $(d_{2:r}, p_0)$ as per Proposition \ref{prop:8c_gov}. $K/\QQ$ is of degree $2^{r + 1}$, and $L/K$ is of degree $2^{m^2}$ since the discriminant is generic. We then get that the conductor of $L/\QQ$ is bounded by
\[N^{2^{m^2 + r + 1}}.\]
Write $p_1 \sim p_0$ if $p_1p_0$ is a square mod $8d_{2:r}$. Also write $N'$ for $Nd_{2:r}^{-1}$. Then, using Assumption \ref{ass:metab} and Proposition \ref{prop:ineff_1}, we get
\[\sum_{\substack{p_1 \sim p_0 \\ p_1 < N'}} \ln p_1 = 2^{-r-1}N' + O\big(2^r\sqrt{N'}(\ln N)^2 \big),\]
where this strong form of Chebotarev's density theorem coming from the variant of \cite[(5.109)]{Iwan04} given on the subsequent page in that book. The implied constant does not depend on $N$, $r$, or $\overline{x}$. Since $N'$ is at least $e^{e^{\sqrt{\ln \ln N}}}$, it far outpaces the error, and we easily get
\begin{equation}
\label{eq:ur1v}
\sum_{\substack{p_1 \sim p_0 \\ p_1 < N'}} \ln p_1 = 2^{-r-1}N' \cdot \left(1 +  O\left((\ln N')^{-1/2}\right)\right)
\end{equation}
 We also get
\begin{equation}
\label{eq:ur2v}
\sum_{\substack{p_1 \sim p_0 \\ p_1 < N' \\ P_{mj}(p_1, \dots, p_r)}} \ln p_1 =  2^{-r-1}P^{\text{Mat}}(j \, | \, m)N' \cdot \left(1 +  O\left((\ln N')^{-1/2}\right)\right)
\end{equation}
where the implied constant depends only $m$.

Take $P$ to be a property of primes. A classical argument \cite[(22.4.2)]{HaWr60} gives that, for $0 < \delta < 1/2$, we have
\[\sum_{P(p_1)\text{ and } p_1 < N'} 1 =  \frac{1 + O(\delta)}{\ln N'} \bigg(\sum_{P(p_1)\text{ and } p_1 < N'} \ln p_1 \bigg) + O (N'^{1-\delta}).\]
Taking $\delta = (\ln N')^{-1/2}$, we can use such estimates on the ratio of \eqref{eq:ur1v} and \eqref{eq:ur2v}, finding
\[\frac{\left| \{x \sim \overline{x} \,:\,\, P_{mj}(x)\}\right|}{\left| \{x \sim \overline{x} \}\right|} = P^{\text{Mat}}(j \, | \, m) + O\big((\ln N')^{-1/2}\big).\]
The lower bound on $N'$ then gives \eqref{eq:equiv_c_pnt}, and thus the theorem.
\end{proof}

The proof of Theorem \ref{thm:4Sel_stats} is very similar. At the beginning, we would need to take $D$ to be a multiple of the conductor of $E$, and vary $d_0$ among all divisors of $D$. We also need to derive an effective version of Proposition \ref{prop:ineff_2}. However, while the probabilities generated are different, the errors are given by the same expression, and the rest of the proof goes as above. Then Theorem \ref{thm:4Sel_stats} is also true.

\subsection{Proof of the lemmas}
\begin{proof}[Proof of Lemma \ref{lem:p1_tiny}]
Let $D$ be given. Per a result of Sathe and Selberg \cite{Selb54, Pome85}, there are positive constants $k_1, k_2$ so that, for sufficiently large $N$ and any $r$ within $(\ln \ln N )^{3/4}$ of $\ln \ln N$, we have
\[k_1  \frac{r N(\ln \ln N)^{r-1}}{\ln N} \le |S_{N, r, D}| \le k_2 \frac{r N(\ln \ln N)^{r-1}}{\ln N} .\]
Using these bounds with $S_{N, r-1, D}$, we find that the subset of $S_{N, r, D}$ with $p_1$ less than $e^{e^{\sqrt{\ln \ln N}}}$ has size less than
\[\sum_{p_1}^{e^{e^{\sqrt{\ln \ln N}}}} 2k_2\frac{r (N/p)(\ln \ln N)^{r-2}}{\ln N}\]
where the sum is over primes. The ratio of this portion to the whole is on the order of
\[(\ln \ln N)^{-1} \left(\sum_{p_1}^{e^{e^{\sqrt{\ln \ln N}}}} \frac{1}{p_1} \right)\]
which in turn is on the order of $(\ln \ln N)^{-1/2}$. This goes to zero as $N$ increases, proving the lemma.
\end{proof}

\begin{proof}[Proof of Lemma \ref{lem:generic}]
We can determine if $(d_0d_{2:r}, \,p_1)$ is generic from the Legendre symbols $\left(\frac{-1}{p_i}\right)$, $\left(\frac{2}{p_i}\right)$, and  $\left(\frac{p_i}{p_j}\right)$ for $i, j \le r$. Similarly, we can determine if $(E^{(d_{2:r})}, \,p_1)$ is generic from these Legendre symbols and the symbols $\left(\frac{q}{p_i}\right)$ for all prime divisors $q$ of the conductor of $E$.

This suggests we take the tactic of Swinnerton-Dyer and Kane. First, we will prove that non-generic discriminants and twists have zero density in the set of all possible assignments of Legendre symbols as $r$ goes to infinity. Second, we will use this result to prove that non-generic discriminants and twists have zero density in the $S_{N, r, D}$.

For fixed $d_0$ and unfixed $r$, there are on the order of $2^r$ divisors $a$ of $d_0d_{2:r}$. We will find an upper bound on the probability over assignments of Legendre symbols that a particular one of these divisors $a$ satisfies
\[(d_0d_{2:r}p_1, a)_v = (-d_0d_{2:r}p_1, a)_v = +1\]
at all rational places. So suppose $a$ has $k$ prime factors dividing $d_{2:r}$. If $p_j$ is one of these primes, we have
\begin{equation}
\label{eq:gen_cond_p}
\left(\frac{-1}{p_j}\right) =  \left(\frac{d_0d_{2:r}p_1/a}{p_j} \right) = +1.
\end{equation}
There are $2k$ Legendre symbols here. We also must have
\[\left(\frac{a}{p_i}\right) = +1\]
at all primes $p_i | d_{2:r}$ not dividing $a$, for another $r-k -1$ symbols. Added together, these make $r + k - 1$ total symbols, all independent for any nonsquare $a$. Then, over all assignments of Legendre symbols, the probability that $a$ satisfies \eqref{eq:gen_cond_p} at all $v$ is $O(2^{-r-k})$. Then the probability that any nonsquare $a$ satisfies this equation at all $v$ is
\[O\left(\sum_{k \ge 0} \binom{r}{k} 2^{-r-k} \right) = O(0.75^r)\]
by the binomial theorem. Then the density of non-generic discriminants over the set of Legendre symbol assignments approaches zero.

This argument is a rehash of the argument given by Swinnerton-Dyer in Lemmas 4 through 7 of \cite{Swin08}. It is more complicated to determine the density of Legendre symbol assignments that lead to generic elliptic curves; fortunately, this is directly what Swinnerton-Dyer proves in these lemmas. For example, with Lemma 7, Swinnerton-Dyer proves that the set of twists with nontrivial $2$-Selmer elements
\[(a_1, a_2, a_3) \quad\text{and} \quad (a'_1, a'_2, a'_3)\quad \text{with}\quad a_1 = a'_2\]
has density zero over the set of Legendre symbol assignments.

Given a discriminant $\Delta = dp_1$ of an imaginary quadratic field, take $V_{ng}(d, p_1)$ to be the space of $a$ dividing $d$ so
\[(\Delta, a)_v = (-\Delta, a)_v = +1\]
at all places. Here, $ng$ stands for nongeneric. $V_{ng}(d, p_1)$ always corresponds a subgroup of $2(\text{Cl}\, K_{\Delta}[4])$, and over the distribution of Legendre symbols, the proportion of discriminants with $V_{ng}(d_0d_{2:r}, p_1)$ nontrivial approaches zero. Since the limit distribution of $2(\text{Cl}\, K_{\Delta}[4])$ over all possible Legendre symbol assignments has finite moments, we find that the moments of $|V_{ng}(d_0d_{2:r}, p_1)|$ over the distribution of Legendre symbols all approach one as $r$ increases.

Similarly, take $W_{ng}(E^{(d)}, p_1)$ to be the subset of $2$-Selmer elements 
\[F = (a_1, a_2, a_3) \in \SelT(E^{(dp_1)})\]
that are unramified at $p_1$ and so that there is some other Selmer element $F' = (a'_1, a'_2, a'_3)$, also unramified at $p_1$, with $a'_1 = a_2$. Similarly, take $W'_{ng}(E^{(d)}, p_1)$ to be the subset of $2$-Selmer elements in the form $(1, a_2, a_2)$.  Following the same argument as for discriminants, we find that the moments of $|W_{ng}(E^{(d_{2:r})}, p_1)|$ and $|W'_{ng}(E^{(d_{2:r})}, p_1)|$  over the distribution of Legendre symbols all approach one as $r$ increase.

The only distribution that agrees with any of these moments is the distribution where $100\%$ of the spaces $V_{ng}$, $W_{ng}$, $W'_{ng}$ are trivial. All we need to now is prove that some non-trivial moment calculated over the distribution of Legendre symbols is equal to the corresponding moment calculated over the average of the $S_{N, r, D}$.

For elliptic curves, we will actually prove that the average of
\begin{equation}
\label{eq:wng_wpng}
|W_{ng}(E^{(d)}, p_1))| \cdot |W'_{ng}(E^{(d)}, p_1)|
\end{equation}
is one over $S_{N, r, D}$, which will be enough to show the distribution of both groups is trivial. It is immediate from the Cauchy-Schwarz inequality that the average of this product over the distribution of Legendre symbols is trivial.

Our proof comes directly from the proof of Proposition 18 in Kane's article \cite{Kane13}, and we explain the modifications needed for his argument in the context of this proof. In this proof, Kane writes the $2$-Selmer group of $E^{(dp_1)}$ as the intersection of two spaces $U, W$ in a larger space $V$. There is a natural nondegenerate alternating pairing 
\[\langle\,\,\,,\,\,\,\rangle: V \times V \rightarrow \mathbb{F}_2.\]
To be concrete, $U$ consists of tuples
\[(a_1, a_2, a_3) \in (\QmodSq\big)^{\oplus 3}\,\,\, \text{with }\,\,a_1a_2a_3 = 1\]
with no $a_i$ divisible by a prime outside $dp_1$ or twice the conductor of $E$. Kane's computation of the second moment starts with
\[\big|\SelT(E^{(dp_1)})\big|^2 = \left|(U \cap W)^{\oplus 2}\right| = \frac{1}{|W|^2}\sum_{\substack{w_1, w_2 \in W\\u_1, u_2 \in U}}  (-1)^{\langle w_1, u_1 \rangle + \langle w_2, u_2 \rangle}.\]
Take $U^2_{ng}$ to consist of tuples 
\[\big((a_1, a_2, a_3), (a'_1, a'_2, a'_3)\big) \in U^{\oplus 2}\]
with $a_2 = a'_1$ and with all $a_i$ and $a'_i$ not divisible by $p_1$. We see that
\[U^2_{ng} \cap W^{\oplus2} = U^2_ng \cap \big(\SelT(E^{(dp_1)})\big)^{\oplus 2},\]
and from this we can find that 
\[|U^2_{ng} \cap W^{\oplus2}| = |W_{ng}(E^{(d)}, p_1))| \cdot |W'_{ng}(E^{(d)}, p_1)|.\]
Then
\[ |W_{ng}(E^{(d)}, p_1))| \cdot |W'_{ng}(E^{(d)}, p_1)| = \frac{1}{|W|^2}\sum_{\substack{w_1, w_2 \in W\\(u_1, u_2) \in U^2_{ng}}}  (-1)^{\langle w_1, u_1 \rangle + \langle w_2, u_2 \rangle}.\]
We can use the rest of Kane's argument verbatim to find the average of this sum over $S_{N, r, D}$. Then the limit of the average of \eqref{eq:wng_wpng} over assignments of Legendre symbols equals the limit of its average over the $S_{N, r, D}$. This gives the lemma.

The translation from distributions over Legendre symbols to distributions over $S_{N, r, D}$ follows similarly for non-generic discriminants once we have phrased the calculation of $4$-class ranks in a form that Kane can deal with. This is done in \cite{Smith16}, and we subsequently find that the limit density of non-generic discriminants over the $S_{N, r, D}$ is zero. The lemma follows.
\end{proof}

\bibliography{references}{}
\bibliographystyle{amsplain}

\end{document}